%3-enero-2016 de dic.
%esperando que diga algo Brett
%Enviado a publicar

%% LyX 2.1.4 created this file.  For more info, see http://www.lyx.org/.
%% Do not edit unless you really know what you are doing.
\documentclass[12pt,english]{amsart}
\usepackage[T1]{fontenc}
\usepackage[latin9]{inputenc}
\usepackage{geometry}
\geometry{verbose,tmargin=3cm,bmargin=3cm,lmargin=3cm,rmargin=3cm}
\usepackage{color}
\usepackage{babel}
\usepackage{refstyle}
\usepackage{amsmath}

\usepackage{amsthm}
\usepackage{amssymb}
\usepackage{esint}
\usepackage[unicode=true,
 bookmarks=true,bookmarksnumbered=false,bookmarksopen=false,
 breaklinks=false,pdfborder={0 0 0},%backref=page,
 colorlinks=true]
 {hyperref}

\makeatletter

%%%%%%%%%%%%%%%%%%%%%%%%%%%%%% LyX specific LaTeX commands.

\AtBeginDocument{\providecommand\eqref[1]{\ref{eq:#1}}}
\RS@ifundefined{subref}
  {\def\RSsubtxt{section~}\newref{sub}{name = \RSsubtxt}}
  {}
\RS@ifundefined{thmref}
  {\def\RSthmtxt{theorem~}\newref{thm}{name = \RSthmtxt}}
  {}
\RS@ifundefined{lemref}
  {\def\RSlemtxt{lemma~}\newref{lem}{name = \RSlemtxt}}
  {}

%%%%%%%%%%
\newcommand{\strt}[1]{\rule{0pt}{#1}}

%%%%%%%%%%%%%%%%%%%%%%%%%%%%%% Textclass specific LaTeX commands.
\theoremstyle{plain}
\newtheorem{thm}{\protect\theoremname}
\theoremstyle{plain}
\newtheorem{observation}{\protect\observationname}
\theoremstyle{plain}
\newtheorem*{observation*}{\protect\observationname}

  \theoremstyle{plain}
  \newtheorem{conjecture}{\protect\conjecturename}
  \theoremstyle{definition}
  \newtheorem{defn}{\protect\definitionname}
\newenvironment{lyxlist}[1]
{\begin{list}{}
{\settowidth{\labelwidth}{#1}
 \setlength{\leftmargin}{\labelwidth}
 \addtolength{\leftmargin}{\labelsep}
 }}
{\end{list}}
  \theoremstyle{plain}
  \newtheorem{lem}{\protect\lemmaname}

%%%%%%%%%%%%%%%%%%%%%%%%%%%
\newcommand{\note}[1]{\vskip.3cm
\fbox{%
\parbox{0.93\linewidth}{\footnotesize #1}} \vskip.3cm}

\def\XXint#1#2#3{{\setbox0=\hbox{$#1{#2#3}{\int}$}
    \vcenter{\hbox{$#2#3$}}\kern-.5\wd0}}

%%%%%%%%%%%%%%%%%%%%%%%%%%%%%% User specified LaTeX commands.
\DeclareMathOperator{\supp}{supp}
\usepackage{url}

\makeatother

  \providecommand{\conjecturename}{Conjecture}
  \providecommand{\definitionname}{Definition}
  \providecommand{\lemmaname}{Lemma}
  \providecommand{\observationname}{Observation}
\providecommand{\theoremname}{Theorem}

\begin{document}

%\title{A brief historical review and three observations on commutators of Calderón-Zygmund operators with BMO functions.}

\title[Three observations on commutators of SIO with BMO functions]{Three observations on commutators of Singular Integral operators with BMO functions}

\author{Carlos P\'erez}
\address{Carlos P\'erez, Department of Mathematics, University of the Basque Country UPV/EHU, 
IKERBASQUE, Basque Foundation for Science, Bilbao and BCAM, Basque Center for Applied Mathematics, Bilbao,  Spain.
}
\email{carlos.perezmo@ehu.es}

\author{Israel P. Rivera-R\'{\i}os}
\address{Israel P. Rivera-R\'{\i}os, IMUS \& Departamento de Análisis Matemático, Universidad de Sevilla, Sevilla, Spain}
\email{petnapet@gmail.com}

 \subjclass[2010]{42B35,46E30}

 %\keywords{Commutators, Rubio de Francia Extrapolation; $A_p$ weights; Hardy-Littlewood maximal function}

\thanks{The first author was supported by  Severo Ochoa Excellence Programme and the Spanish Government grant MTM2014-53850-P and the second author was supported by Grant MTM2012-30748, Spanish Government}

%\thanks{ The first author is is supported by  Severo Ochoa Excellence Programme and the Spanish Government grant MTM2014-53850-P}

\begin{abstract}
Three observations on commutators of Singular Integral Operators with BMO functions are exposed, namely
\begin{list}{}{}
\item[\textbf{Section \ref{decay}}] The already known subgaussian local decay for the commutator, namely \[\frac{1}{|Q|}\left|\left\{x\in Q\, : \, |[b,T](f\chi_Q)(x)|>M^2f(x)t\right\}\right|\leq c e^{-\sqrt{ct\|b\|_{BMO}}}\] is sharp, since it cannot be better than subgaussian. 
\item[\textbf{Section \ref{sparse}}] It is not possible to obtain a pointwise control of the commutator by a finite sum of sparse operators defined by  $L\log L$ averages. 
\item[\textbf{Section \ref{negativeEstimateConjugationMethod}}] Motivated by the conjugation method for commutators, it is shown the failure of the following endpoint estimate, if $w\in A_p\setminus A_1$ then $$\left\| wM\left(\frac{f}{w}\right)\right\|_{L^1(\mathbb{R}^n)\rightarrow L^{1,\infty}(\mathbb{R}^n)}=\infty.$$
\end{list}
\end{abstract}

\maketitle

\section{Introduction}
The purpose of this paper is to present some observations concerning  commutators of singular integral operators with BMO functions. These operators were introduced by Coifman,
Rochberg and Weiss in \cite{MR0412721} 
%motivated by  the previous work on the Calder\'on commutator and 
as a tool to extend the classical factorization theorem for Hardy spaces in the unit circle to $\mathbb R^n$. These operators are defined by the expression
\begin{equation}
T_b f(x)=\int_{\mathbb R^n} (b(x)-b(y))K(x,y)f(y)\,dy,
\end{equation}
where $K$ is a kernel satisfying the standard Calder\'on-Zygmund
estimates and where $b$, the
``symbol'' of the operator, is a locally integrable function. Of course, these are special cases of the more 
general commutators given by the expression
$$T_b= [b,T]=M_{b}\circ T -T \circ M_{b}$$
where $T$ is any operator and $M_{b}$ is the multiplication operator $M_{b}f=b \cdot f$.

%\subsection{Background}

The classical well known result from \cite{MR0412721} establishes  that $[b, T]$ is a bounded operator on $L^{p}( \mathbb R^{n} )$, $1<p<\infty$, when the symbol $b$ is a $BMO$ function. We state this result.

%{\color{green}
\begin{thm} \label{Thm:LpBddness} 
Let $T$ be a singular integral operator and
$b$ a $BMO$ function. The commutator \,$T_b$ \,is bounded on $L^{p}(\mathbb{R}^{n})$ for every $1<p<\infty$.  
\end{thm}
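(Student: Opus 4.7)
The plan is to employ the \emph{conjugation method} of Coifman, Rochberg and Weiss. For $z \in \mathbb{C}$, introduce the family
\[
F(z)f(x) \;=\; e^{zb(x)}\, T\!\left(e^{-zb}f\right)(x),
\]
which formally satisfies $F'(0)f = bTf - T(bf) = [b,T]f$. Fixing $f$ bounded with compact support, one checks that for each $x$ the function $z \mapsto F(z)f(x)$ is entire, so Cauchy's integral formula provides
\[
[b,T]f(x) \;=\; \frac{1}{2\pi i}\oint_{|z|=r} \frac{F(z)f(x)}{z^{2}}\,dz\qquad (r>0).
\]
Taking $L^p$ norms and invoking Minkowski's integral inequality will reduce the theorem to a uniform estimate $\sup_{|z|=r}\|F(z)f\|_{L^p} \le C\|f\|_{L^p}$ for a suitable $r$, which yields $\|[b,T]f\|_{L^p} \le (C/r)\|f\|_{L^p}$.

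To produce the uniform bound, the exponential factor is to be absorbed into a weight. Setting $w_z = e^{p\,\mathrm{Re}(z)\,b}$ and using that $b$ is real, one sees
\[
\|F(z)f\|_{L^p}^p = \int |T(e^{-zb}f)(x)|^p\, w_z(x)\,dx \quad\text{and}\quad \int |e^{-zb}f|^p\, w_z\,dx = \|f\|_{L^p}^p.
\]
Therefore $\|F(z)f\|_{L^p} \le \|T\|_{L^p(w_z)\to L^p(w_z)}\,\|f\|_{L^p}$, and the desired uniform estimate reduces to bounding $[w_z]_{A_p}$ independently of $z$ on the circle $|z|=r$, after which the weighted $L^p$ theory for Calder\'on--Zygmund operators closes the argument.

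The central technical step, which I expect to be the main obstacle, is the following consequence of the John--Nirenberg inequality: there exist constants $c_{n,p}, C_{n,p}$ such that $[e^{\alpha b}]_{A_p} \le C_{n,p}$ whenever $|\alpha| \le c_{n,p}/\|b\|_{BMO}$. The exponential integrability of BMO functions lets one control both averages $\langle e^{\alpha(b-b_Q)}\rangle_Q$ and $\langle e^{-\alpha(b-b_Q)/(p-1)}\rangle_Q$ on any cube $Q$, from which the $A_p$ bound follows after the self-cancelling factor $e^{\alpha b_Q}$ is eliminated. Choosing finally $r = c_{n,p}/(p\|b\|_{BMO})$ ensures $p\,|\mathrm{Re}(z)| \le c_{n,p}/\|b\|_{BMO}$ on the contour, and substitution into the Cauchy bound delivers $\|[b,T]f\|_{L^p} \le C'_{n,p}\,\|b\|_{BMO}\,\|f\|_{L^p}$; the general case follows by density.
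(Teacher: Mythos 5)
Your proposal is the conjugation method of Coifman--Rochberg--Weiss, which is precisely the argument the paper itself highlights (Cauchy integral representation of $[b,T]$ via $T_z f = e^{zb}T(fe^{-zb})$, Minkowski's inequality, the John--Nirenberg fact that $e^{\alpha b}\in A_p$ with uniformly bounded constant for $|\alpha|\lesssim \|b\|_{BMO}^{-1}$, and the weighted $L^p(w)$, $w\in A_p$, bound for $T$). The steps are correct, with only the usual analyticity/measurability technicalities left implicit at the same level as in the paper's sketch.
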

%
%{\color{red} En esta línea he quitado el primer also }
In the same paper it is shown that \,$b\in BMO$\, is also a necessary condition namely, if the commutators $[b,R_{j}]$, $j=1,\cdots,n$\, of \,$b$\, with the Riesz transforms \,$R_{j}$\, are bounded on
\,$L^{p}(\mathbb{R}^{n})$\, for some $p\in(1,\infty)$ and every $j\in\{1,2,\dots,n\}$
then $b\in BMO$.

%We will always assume througouht the paper that $b \in BMO(\mathbb R^n)$ unless otherwise is mentioned.

None of the different proofs of this result follows the usual
scheme of the classical Calder\'on-Zygmund theory for proving the $L^{p}(\mathbb{R}^{n})$ boundedness of singular
integral operators $T$. Two proofs of Theorem \ref{Thm:LpBddness} can be found in \cite{MR0412721}. The first and main one in that paper is based on methods involving techniques similar to those used in \cite{MR0380244} 
to understand the Calder\'on commutator. As far as we know this approach has not been so influential. However, the second proof, based on the so called conjugation method from operator theory, has been widely used. In fact, it is quite surprising that this proof was postponed to the end of the paper since it turns out to be highly interesting. Indeed, the method shows the intimate connection between these commutators and the $A_{p}$ theory of weights. Furthermore, this proof can be applied to general linear operators, not only for Singular Integral Operators. As a sample we will point out the following particular \,$L^{2}$\, case: 
\begin{thm}\label{Thm:Alt} Suppose that $T$ is a linear operator such that 
$$
T:L^{2}(w)\longrightarrow L^{2}(w)
$$
for every $w\in A_{2}$. Then for every $b\in BMO$, 
$$
[b,T]:L^{2}(\mathbb{R}^{n})\longrightarrow L^{2}(\mathbb{R}^{n}).
$$
\end{thm}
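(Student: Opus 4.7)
The plan is to use the classical conjugation method of Coifman--Rochberg--Weiss, which realizes $[b,T]$ as the derivative at $z=0$ of a holomorphic family of conjugated operators. Define, for $z\in\mathbb{C}$,
\[
T_z f(x) = e^{zb(x)}\, T\!\left(e^{-zb}f\right)(x).
\]
A direct differentiation in $z$ gives $\frac{d}{dz}\big|_{z=0}T_z f = bTf - T(bf) = [b,T]f$. Since $z\mapsto T_z f(x)$ is holomorphic, Cauchy's integral formula on a circle of radius $\varepsilon$ around the origin yields
\[
[b,T]f(x) = \frac{1}{2\pi i}\oint_{|z|=\varepsilon} \frac{T_z f(x)}{z^{2}}\,dz,
\]
and Minkowski's inequality in $L^{2}(\mathbb{R}^{n})$ together with the trivial length estimate of the contour gives
\[
\|[b,T]f\|_{L^{2}} \;\leq\; \frac{1}{\varepsilon}\,\sup_{|z|=\varepsilon}\|T_z f\|_{L^{2}}.
\]

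The heart of the matter is therefore to pick $\varepsilon>0$ such that $\|T_z f\|_{L^{2}}\lesssim \|f\|_{L^{2}}$ uniformly for $|z|=\varepsilon$. Writing $\alpha = 2\operatorname{Re}z$ and setting $w_z := e^{\alpha b}$,
\[
\|T_z f\|_{L^{2}}^{2} = \int_{\mathbb{R}^{n}} \bigl|T(e^{-zb}f)(x)\bigr|^{2}\,w_z(x)\,dx = \bigl\|T(e^{-zb}f)\bigr\|_{L^{2}(w_z)}^{2}.
\]
By the hypothesis, if $w_z\in A_{2}$ then $T$ is bounded on $L^{2}(w_z)$, and then
\[
\|T_z f\|_{L^{2}}^{2} \leq C\,\bigl\|e^{-zb}f\bigr\|_{L^{2}(w_z)}^{2} = C\!\int_{\mathbb{R}^{n}} |f|^{2}\, e^{-\alpha b}\, e^{\alpha b}\,dx = C\,\|f\|_{L^{2}}^{2},
\]
since $|e^{-zb}|^{2} = e^{-\alpha b}$. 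Thus the whole argument reduces to verifying that $e^{\alpha b}\in A_{2}$ whenever $|\alpha|$ is small enough.

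This last fact is the key lemma and the main (but classical) obstacle: one must show that for $b\in BMO$ there exists $\varepsilon_{0}=\varepsilon_{0}(\|b\|_{BMO})>0$ such that $e^{\alpha b}\in A_{2}$ with uniformly controlled $A_{2}$ constant for all $|\alpha|\leq \varepsilon_{0}$. This is a standard consequence of the John--Nirenberg inequality: for any cube $Q$ and any $|\alpha|<c/\|b\|_{BMO}$,
\[
\avgint_{Q} e^{\alpha b}\,\avgint_{Q} e^{-\alpha b}\,dx \leq C,
\]
after bounding $\avgint_{Q}e^{\pm\alpha(b-b_Q)}$ by John--Nirenberg. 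Once this is in hand, choose $\varepsilon = \varepsilon_{0}/2$, apply the bound for $T_z$ above on $|z|=\varepsilon$, and the Cauchy estimate yields $\|[b,T]f\|_{L^{2}}\lesssim \|b\|_{BMO}\,\|f\|_{L^{2}}$, completing the proof. The argument is in fact entirely abstract: it uses only linearity and the $A_{2}$ boundedness of $T$, which is why it extends far beyond Calder\'on--Zygmund operators.
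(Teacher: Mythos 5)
Your proof is correct and is essentially the paper's own argument: the conjugation method, writing $[b,T]f=\frac{1}{2\pi i}\oint_{|z|=\varepsilon}z^{-2}T_z f\,dz$ with $T_z f=e^{zb}T(e^{-zb}f)$, applying Minkowski, and reducing matters to the John--Nirenberg fact that $e^{\alpha b}\in A_2$ with controlled constant for $|\alpha|$ small relative to $\|b\|_{BMO}$. The only point left implicit (in the paper as well) is that the hypothesis is used with an operator norm bound uniform over weights of bounded $A_2$ constant, so that the estimate for $T_z$ is uniform on the circle $|z|=\varepsilon$.
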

The method of proof can be carried out in more generality as shown in \cite{MR1211818}.
%Much later, this method turned to be really optimal in the context of ($L^p$, $p>1$) quantitative weighted estimates as can be seen in \cite{MR2869172} or \cite{MR3092729}. 
The key initial argument of the proof is that we can write $[b,T]$ as a complex integral operator using the Cauchy integral theorem
as follows 
$$
[b,T]f=\left.\frac{d}{dz}e^{zb}T(fe^{-zb})\right|_{z=0}=
\frac{1}{2\pi i}\int_{|z|=\varepsilon}
\frac{T_z(f)}{z^2}\,dz\, , \quad \varepsilon>0
$$
where 
$$z\to T_z(f) := e^{zb} T\left(\frac{f}{e^{zb}}\right) \qquad z \in \mathbb{C}. 
$$
This is called the ``conjugation'' of $T$ by $e^{zb}$ and the terminology comes most probably from group theory. Now,  if $\|\cdot\|$ is a norm we can apply Minkowski inequality:
$$
\left\| [b,T]f \right\| \leq \frac{1}{2\pi\,\varepsilon} \,\sup_{|z|=\varepsilon}  
\left\| T_z(f)\right\| \qquad \varepsilon>0. 
$$
The effectiveness of the method can be checked in the modern context of weighted $L^{p}$ estimates. Indeed, the method  produces very optimal bounds of the operator norm as shown in \cite{MR2869172} (see also \cite{MR3092729}).

%\note{repasar ingles}

This method reveals the role played by the following operation: 
\begin{equation*}
f
\to   T_w(f) := w\,T\left(\frac{f}{w}\right)
\end{equation*}
where $w$ is a weight which, in this context, is an $A_p$ weight. Indeed, this is the case by the well known key property of the BMO class, if $p>1$ and $b\in BMO$ then there is a small $\varepsilon_0$, such that $e^{tb}\in A_p$, for any real number $t$ such that $|t|<\varepsilon_0$.  These operators were already studied by B. Muckenhoupt and R. Wheeden in the 70's and  by E. Sawyer in the 80's. Some of the problems they left open were solved in   
\cite{MR2172941}. A consequence of the main result of \cite{MR2172941} is that if $w\in A_1$ then $T_w$ is of weak type $(1,1)$, namely 
$$
\left\| T_w  \right\|_{L^{1}(\mathbb{R}^{n}) \to L^{1, \infty}(\mathbb{R}^{n})}<\infty
$$
with bound depending upon the $A_1$ constant of $w$. However, we will exhibit examples of weights $w\in A_{p}\setminus A_{1}$ in Section \ref{negativeEstimateConjugationMethod} for which $T_w$ is {\bf not} of weak type $(1,1)$, namely
$$
\left\| T_w  \right\|_{L^{1}(\mathbb{R}^{n}) \to L^{1, \infty}(\mathbb{R}^{n})} =\infty.
$$
This shows that the case $w \in A_{1}$ is specially relevant. 
Perhaps, this  phenomenon can be explained by the fact that the conjugation method is closely attached to commutators with BMO functions which are {\bf not} of weak type $(1,1)$ as observed in \cite{MR1317714}. Indeed, the conjugation method works due to the property, already mentioned, that if $p>1$ and $b\in BMO$ then $e^{tb}\in A_p$ for small values of $t$. However,  this property turns out to be false in the case $p=1$.  The lack of the weak type $(1,1)$ property for commutators  is replaced by a $L\log L$ inequality like (\ref{LogL}) below and not better.  

There is another proof of Theorem \ref{Thm:LpBddness}  based on the use of the sharp maximal function of C. Fefferman and E. Stein which has also been very influential. 
It seems that it was first discovered by J. O. Str\"omberg as mentioned by S. Janson in 
\cite{MR524754} (see also \cite{T} pp. 417-419)
The proof relies on combining the following key pointwise estimate 
\begin{equation}\label{eq:St}
M^{\sharp}([b,T]f)\leq c\|b\|_{BMO}\left(M_{r}\,\left(Tf\right)+M_{s}\left(f\right)\right) 
\end{equation}
where $1<r,s<\infty$ and $M_r(f)= M(|f|^r)^{1/r}$ together with the classical Fefferman-Stein inequality: 
\[
\|M(f)\|_{L^{p}}\leq c\|M^{\sharp}(f)\|_{L^{p}}. 
\]
Here we use standard notation, $M$ is the Hardy-Littlewood maximal function and $M^{\sharp}$ is the sharp maximal function. 
The $L^{p}$ boundedness of $M$ and $T$ yields the alternative proof of Theorem \ref{Thm:LpBddness}.
Proceeding in the same way we obtain the corresponding estimates for
$A_{p}$ weights. 

This approach was considered by S. Bloom in (\cite{B}) extending in an interesting way Theorem \ref{Thm:LpBddness} but only on the real line.

\begin{thm}
Let  $\mu,\lambda\in A_{p}$ and let $H$ be the Hilbert transform: 
\[
[b,H]\,:\,L^{p}(\mu)\longrightarrow L^{p}(\lambda)
\]
where   $\nu=\mu^{\frac{1}{p}}\lambda^{-\frac{1}{p}}$
if and only if 
\begin{equation}\label{newBMO}
\|b\|_{BMO(\nu)}=\sup_{Q}\frac{1}{\nu(Q)}\int_{Q}\left|b-b_{Q}\right|<\infty.
\end{equation}

\end{thm}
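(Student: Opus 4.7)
The plan is to establish the two directions separately, using a Str\"omberg-style sharp maximal function argument for sufficiency and a Coifman--Rochberg--Weiss-style test function argument for necessity, both adapted to the two-weight setting encoded by $\nu$.

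For the sufficiency direction, the natural object is a $\nu$-weighted sharp maximal function
\[
M^{\sharp}_{\nu}f(x) = \sup_{I \ni x}\inf_{c}\frac{1}{\nu(I)}\int_{I}|f - c|\,dy,
\]
where $I$ ranges over bounded intervals. I would then prove a pointwise estimate of the Str\"omberg type
\[
M^{\sharp}_{\nu}([b,H]f)(x) \leq c\,\|b\|_{BMO(\nu)}\bigl(M_{r}(Hf)(x) + M_{s}f(x)\bigr),
\]
in close analogy with \eqref{eq:St}. The key analytic input is a John--Nirenberg inequality relative to $\nu$, available because $\mu,\lambda\in A_{p}$ forces $\nu$ to enjoy an $A_{\infty}$-type self-improvement property. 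Once this pointwise estimate is in hand, combining it with a two-weight Fefferman--Stein inequality $\|f\|_{L^{p}(\lambda)}\leq c\|M^{\sharp}_{\nu}f\|_{L^{p}(\lambda)}$ together with the known weighted $L^{p}(\mu)\to L^{p}(\lambda)$ estimates for $H$ and the corresponding bounds for $M_{r}$ and $M_{s}$ (the latter obtained via the explicit relation $\nu = \mu^{1/p}\lambda^{-1/p}$ and H\"older's inequality) yields $[b,H]\colon L^{p}(\mu)\to L^{p}(\lambda)$.

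For the necessity direction, I would follow the one-dimensional Coifman--Rochberg--Weiss duality idea. Given a bounded interval $I$, choose a companion interval $J$ of length $|I|$ placed at bounded distance from $I$, so that the kernel $\tfrac{1}{x-y}$ of $H$ is bounded above and below by positive constants for $x\in I$, $y\in J$. Testing the commutator on $f = \chi_{J}$ gives, for $x\in I$,
\[
[b,H]\chi_{J}(x) \approx \frac{c}{|I|}\int_{J}\bigl(b(x) - b(y)\bigr)\,dy = c\bigl(b(x) - b_{J}\bigr),
\]
so that integrating over $I$ and applying H\"older yields
\[
\int_{I}|b - b_{J}|\,dx \leq c\,|I|\,\|[b,H]\chi_{J}\|_{L^{1}(I)} \leq c\,|I|\,\lambda(I)^{1/p'}\,\bigl\|[b,H]\chi_{J}\bigr\|_{L^{p}(\lambda)}.
\]
Inserting the two-weight hypothesis and using $\nu = \mu^{1/p}\lambda^{-1/p}$, one obtains $\int_{I}|b-b_{J}|\,dx \leq c\,\nu(I)$, and replacing $b_{J}$ by $b_{I}$ via the triangle inequality (comparable cubes have comparable averages in a standard $BMO$ chaining) delivers the $BMO(\nu)$ bound \eqref{newBMO}.

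The principal obstacle is the sharp-function estimate in the sufficiency direction: one must both prove the pointwise inequality for $M^{\sharp}_{\nu}([b,H]f)$ with the correct linear dependence on $\|b\|_{BMO(\nu)}$ and guarantee a two-weight Fefferman--Stein inequality adapted to $\nu$. The first task requires replacing the usual John--Nirenberg estimate by its $\nu$-weighted version and carefully tracking how $\mu$- and $\lambda$-averages interact with $\nu$-averages through $\nu = \mu^{1/p}\lambda^{-1/p}$; the second requires verifying the $A_{\infty}$ structure (or a suitable substitute) of the involved weights. The necessity direction, by contrast, is essentially a careful repackaging of the classical argument and should proceed without surprises.
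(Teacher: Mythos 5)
Before anything else, note that the paper does not actually prove this statement: it is Bloom's theorem, quoted from \cite{B}, and the paper only indicates that Bloom's argument runs through the sharp maximal function estimate (\ref{eq:St}). Your overall plan (a $\nu$-adapted sharp function for sufficiency, a Coifman--Rochberg--Weiss testing argument for necessity) is indeed the route the paper attributes to Bloom, but as written your sufficiency half has a genuine gap. You invoke ``the known weighted $L^{p}(\mu)\to L^{p}(\lambda)$ estimates for $H$'', yet the Hilbert transform is \emph{not} bounded from $L^{p}(\mu)$ to $L^{p}(\lambda)$ for general distinct $\mu,\lambda\in A_{p}$; that is a two-weight problem which fails in general, so the chain of estimates cannot be closed this way. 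Relatedly, your two-weight Fefferman--Stein inequality is mis-stated: with $M^{\sharp}_{\nu}$ normalized by $\nu(I)$, the inequality $\|f\|_{L^{p}(\lambda)}\leq c\|M^{\sharp}_{\nu}f\|_{L^{p}(\lambda)}$ is not even scale invariant in $\lambda$ (replacing $\lambda$ by $c\lambda$ scales the left side by $c^{1/p}$ and the right side by $c^{2/p}$, since $\nu=\mu^{1/p}\lambda^{-1/p}$ also changes). The correct statement, which is Bloom's key lemma, is $\|f\|_{L^{p}(\lambda)}\leq c\|M^{\sharp}_{\nu}f\|_{L^{p}(\mu)}$, with $\mu$ on the right; with this version the terms produced by the pointwise estimate are measured in $L^{p}(\mu)$, so one only needs the one-weight bounds $H, M_{r}, M_{s}\colon L^{p}(\mu)\to L^{p}(\mu)$ (available since $\mu\in A_{p}$ and by openness of $A_{p}$ for $r,s$ close to $1$), and the false two-weight bound for $H$ is never needed. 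This correction is structural, not cosmetic.

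The necessity sketch also has a gap. From the fact that $1/(x-y)\approx 1/|I|$ with constant sign for $x\in I$, $y\in J$ you cannot conclude $[b,H]\chi_{J}(x)\approx c\left(b(x)-b_{J}\right)$ pointwise: the factor $b(x)-b(y)$ changes sign in $y$, and a kernel that is merely comparable to a constant cannot be pulled out of a sign-changing integral. The standard repair is to work with a median $m$ of $b$ on $J$ and to test the commutator on the subsets of $J$ where $b(y)$ lies on one side of $m$, so the integrand has fixed sign; this gives $\frac{1}{|I|}\int_{I}\left|b-m\right|$ controlled by averages of $|[b,H]\chi_{J'}|$, after which your H\"older step applies (note the display $\int_{I}|b-b_{J}|\leq c|I|\|[b,H]\chi_{J}\|_{L^{1}(I)}$ carries a spurious factor $|I|$). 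Finally, the passage from $\mu(J)^{1/p}\left(\int_{I}\lambda^{1-p'}\right)^{1/p'}$ to $c\,\nu(I)$ is not automatic: it uses the lemma $\left(\frac{\mu(I)}{|I|}\right)^{1/p}\left(\frac{1}{|I|}\int_{I}\lambda^{1-p'}\right)^{1/p'}\leq c\,\frac{\nu(I)}{|I|}$, valid because $\mu,\lambda\in A_{p}$ (via the $A_{\infty}$ comparison of arithmetic and geometric means applied to $\mu$ and $\lambda^{1-p'}$, together with Jensen's inequality for $\nu$); you gesture at this but it needs to be stated and proved, as does the fact that $\nu\in A_{2}$ which underlies the weighted John--Nirenberg inequality you use in the sufficiency part.
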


%{ \color{red} \note{Añado referencia a Wick et. al en el párrafo que sigue a esta nota.} }

%{ \color{red} \note{pregunta: habria que buscar una buena constante} }

The power of the pointwise estimate (\ref{eq:St}) is reflected in many situations, for instance in  \cite{MR1142721}, where similar results were derived for commutators of strongly singular integral with symbol in the new $BMO$ class (\ref{newBMO}) 
%{\color{red}
(see also \cite{HLW,2015arXiv150903769H}  for an alternative approach based on dyadic shifts).
%}

However, estimate (\ref{eq:St}) is not sharp enough for many purposes and much better results can be obtained with the following variation:
\begin{equation} \label{EstimateJFA}
M_{\delta}^{\sharp}([b,T]f) \leq c\|b\|_{BMO}\left(M_{\varepsilon}\left(Tf\right)+M^{2}\left(f\right)\right) \quad 0<\delta<\varepsilon<1
\end{equation}
where $M^{2}$ stands for $M\circ M$ (see \cite{MR1317714}). 
Here, the key difference is that we are considering small parameters $\delta$ and $\varepsilon$.   
The estimate is sharp since $M^2$ cannot be replaced by the (pointwise) smaller operator $M$. Indeed,  otherwise these commutators would be of weak type $(1,1)$  but,  as we mentioned above, this is not the case \cite{MR1317714} where it is shown that commutators  satisfy the following  ``$L\log L$'' type estimate, 
%More recently a new method for proving Theorem \ref{Thm:LpBddness} using Haar shifts decomposition has been used in \cite{MR2997000} motivated by the work of S. Petermichl in \cite{MR1756958}.
%\note{hay que verificar esto, comparar con lo que hizo Bloom y Torrea \& colaboradores}
%{\color{red}
%
\begin{equation} \label{LogL}
w\left(\left\{ x\in\mathbb{R}^{n}\,:\,\left|\left[b,T\right]f(x)\right|>\lambda\right\} \right)\leq c\,\int_{\mathbb{R}^{n}}\Phi\left(\frac{|f|}{\lambda}\|b\|_{BMO}\right)wdx \quad \lambda>0,
\end{equation}
where $w\in A_{1}$, $\Phi(t)=t\log\left(e+t\right)$ and where $c>0$ depends upon the $A_1$ constant. This shows that these commutators are 
``more singular'' than Calder\'on-Zygmund operators. 
The original proof of (\ref{LogL}) follows from the key pointwise (\ref{EstimateJFA}) combined with a good-$\lambda$ type argument,  but an alternative proof was obtained by the first author and G. Pradolini in  \cite{MR1827073} with the bonus that non $A_{\infty}$ weights can be considered. 
This argument is based on a variation of the classical scheme used to prove the weak type $(1,1)$ for Calderón-Zygmund operators. 
The statement of the result is the following. 
\begin{thm}
%[Pérez,Pradolini]
\label{Thm:EndPCual}Let $T$ be a Calderón-Zygmund
operator and $b\in BMO$. If $w$ is an arbitrary weight the following
inequality holds
\[
w\left(\left\{ x\in\mathbb{R}^{n}\,:\,\left|[b,T]f(x)\right|>\lambda\right\} \right)\leq C_{\varepsilon,T}\int_{\mathbb{R}^{n}} \Phi\left(\|b\|_{BMO}\frac{|f(x)|}{\lambda}\right)M_{L\left(\log L\right)^{1+\varepsilon}}w(x)dx
\]
for every $\varepsilon>0$.
\end{thm}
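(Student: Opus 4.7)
\emph{Proof plan.} By homogeneity assume $\|b\|_{BMO}=1$ and $\lambda=1$. The strategy is a variant of the classical Calder\'on--Zygmund weak-type argument, modified to handle the fact that $[b,T]$ is not of weak type $(1,1)$ and to allow an arbitrary weight $w$ on the right. Apply the Calder\'on--Zygmund decomposition to $|f|$ at height $1$: obtain disjoint cubes $\{Q_j\}$ with $1<\langle|f|\rangle_{Q_j}\le 2^n$, set $\Omega^{\ast}:=\bigcup_j 2Q_j$, and write $f=g+h$ with $h=\sum_j h_j$ and $h_j=(f-f_{Q_j})\chi_{Q_j}$ mean-zero on $Q_j$. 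Split
\[
w\bigl(\{|[b,T]f|>1\}\bigr)\le w(\Omega^{\ast})+w\bigl(\{x\notin\Omega^{\ast}:|[b,T]g(x)|>\tfrac12\}\bigr)+w\bigl(\{x\notin\Omega^{\ast}:|[b,T]h(x)|>\tfrac12\}\bigr).
\]
The first term is treated by the standard estimate $w(\Omega^{\ast})\le c\sum_j \langle|f|\rangle_{Q_j}w(2Q_j)\le c\int|f|\,Mw$, which fits into $\int\Phi(|f|)\,M_{L(\log L)^{1+\varepsilon}}w$ since $|f|\le \Phi(|f|)$ and $Mw\le M_{L(\log L)^{1+\varepsilon}}w$.

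For the good-part contribution, I would apply Chebyshev at an exponent $1+\delta$ with $\delta>0$ small, then the Fefferman--Stein inequality for arbitrary weights $\int F^{1+\delta}w\le c\int(M^{\sharp}_{\delta}F)^{1+\delta}\,Mw$, combined with the sharp-maximal pointwise control \eqref{EstimateJFA}. This reduces the problem to bounding $\int (M_{\varepsilon}Tg)^{1+\delta}Mw$ and $\int(M^{2}g)^{1+\delta}Mw$. Iterating the universal bound $\int(Mh)^{p}v\le c\int h^{p}Mv$, invoking a Coifman--Fefferman-type estimate to pass from $Tg$ to $Mg$ against the $A_{\infty}$ weight $M^{2}w$, and using $M^{k}w\lesssim M_{L(\log L)^{k-1}}w$, reduces matters to an integral of $g^{1+\delta}$ against an Orlicz maximal function of $w$. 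Since $|g|\lesssim 1$ a.e.\ and $\|g\|_{L^{1}}\le\|f\|_{L^{1}}$, one can trade integrability for the Orlicz power and conclude with the desired bound $\int\Phi(|f|)\,M_{L(\log L)^{1+\varepsilon}}w$.

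For the bad-part contribution, decompose pointwise
\[
[b,T]h_{j}=(b-b_{Q_{j}})\,Th_{j}-T\bigl((b-b_{Q_{j}})h_{j}\bigr).
\]
Off $2Q_{j}$ one exploits the mean-zero property of $h_{j}$ (and, after subtracting the $Q_{j}$-average, of $(b-b_{Q_{j}})h_{j}$) together with the H\"older regularity of the kernel to produce the decay $\ell(Q_{j})^{\sigma}/|x-c_{j}|^{n+\sigma}$. A dyadic annular decomposition on $2^{k+1}Q_{j}\setminus 2^{k}Q_{j}$, the generalised Orlicz H\"older inequality
\[
\frac{1}{|R|}\int_{R}|b-b_{R}|\,\varphi\,dy\lesssim \|b-b_{R}\|_{\exp L,R}\|\varphi\|_{L\log L,R},
\]
and the John--Nirenberg bound $\|b-b_{Q_{j}}\|_{\exp L,\,2^{k+1}Q_{j}}\le c(k+1)$, convert the BMO factors into convergent series whose sum produces a factor $\langle\Phi(|f|)\rangle_{Q_{j}}\,M_{L(\log L)^{1+\varepsilon}}w(Q_{j})$. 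Summing over $j$ and using the disjointness of the $Q_{j}$'s yields the theorem.

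\emph{The main obstacle} is the good-part estimate: since no weighted bound on $[b,T]$ is available for a general weight, one must combine the pointwise sharp-maximal control \eqref{EstimateJFA} with the universal-weight Fefferman--Stein inequality, which costs a factor of $Mw$, and then dominate the iterated maximal functions $M^{k}w$ by an Orlicz maximal function of $w$. The slack provided by $\varepsilon>0$ in $M_{L(\log L)^{1+\varepsilon}}w$ is precisely what absorbs the logarithmic losses accumulated from this iteration and from the annular summations in the bad-part estimate.
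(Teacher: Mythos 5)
You should first note that the paper does not actually prove Theorem \ref{Thm:EndPCual}: it quotes it from \cite{MR1827073}, describing the proof there as a variation of the classical weak $(1,1)$ scheme. Your skeleton (Calder\'on--Zygmund decomposition, the splitting $[b,T]h_j=(b-b_{Q_j})Th_j-T((b-b_{Q_j})h_j)$, $\exp L$--$L\log L$ duality and John--Nirenberg for the bad part, $w(\Omega^*)\lesssim\int|f|Mw$) is indeed that scheme. The genuine gap is in your good-part estimate. First, the step ``Coifman--Fefferman against the $A_\infty$ weight $M^{2}w$'' is not available: for an arbitrary weight $w$, $Mw$ (hence $M^2w$) need not belong to $A_\infty$ (Coifman--Rochberg only gives $(Mw)^{\delta}\in A_1$ for $\delta<1$; one can build $w$ with $Mw$ finite a.e.\ but failing the $A_\infty$ condition, and even with $Mw\notin L^1_{loc}$). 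Second, and decisively, the bookkeeping of maximal iterations does not close. Each ingredient you invoke costs a full ``$M$'' on the weight: the arbitrary-weight Fefferman--Stein inequality turns $w$ into $Mw$; the universal bound for $M$ applied to $M_{\varepsilon}(Tg)$ turns $Mw$ into $M^{2}w$; removing $T$ (or handling the term $M^{2}g$) for an arbitrary weight costs at least one more logarithmic bump, since $\int(M^{2}g)^{p}v\lesssim\int|g|^{p}M_{L(\log L)^{2p-1+\delta}}v$ and $M_{L(\log L)^{a}}\circ M_{L(\log L)^{b}}\approx M_{L(\log L)^{a+b+1}}$. Chaining these, your good-part bound lands on a weight of the order $M_{L(\log L)^{2+\delta}}w\approx M^{3}w$, whereas the theorem asserts $M_{L(\log L)^{1+\varepsilon}}w$, which lives just above $M^{2}w\approx M_{L\log L}w$. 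The $\varepsilon>0$ slack buys only a fractional power of $\log$ beyond $M^2$; it cannot absorb a full extra iteration of $M$, so your final claim that it ``absorbs the logarithmic losses accumulated from this iteration'' is where the argument fails, and as written you prove only a strictly weaker statement.

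The way this is handled in \cite{MR1827073} is to avoid the $M^{\sharp}_\delta$-plus-iterated-Fefferman--Stein chain altogether for the good part: one uses a two-weight $L^{p}$ estimate for the commutator valid for arbitrary weights, of the form $\int_{\mathbb{R}^n}|[b,T]g|^{p}w\,dx\leq C\int_{\mathbb{R}^n}|g|^{p}M_{L(\log L)^{2p-1+\delta}}w\,dx$, and then takes $p$ close to $1$ so that $2p-1+\delta\leq 1+\varepsilon$; combined with $|g|\lesssim\lambda$ this yields a term controlled by $\int\frac{|f|}{\lambda}M_{L(\log L)^{1+\varepsilon}}w$, which fits the right-hand side. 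If you want to complete your proof along your own lines, you need to either prove and use such a two-weight $L^p$ bound with exponent $2p-1+\delta$ (and justify it without any $A_\infty$ assumption on iterated maximal functions), or find another route for the good part that costs at most one $L\log L$-bump plus an $\varepsilon$-bump on $w$; similarly, in the bad part you should make explicit where the exponent $1+\varepsilon$ (rather than a larger one) comes from when you treat $T\bigl((b-b_{Q_j})h_j\bigr)$, which requires the arbitrary-weight endpoint estimate for $T$ with $M_{L(\log L)^{\varepsilon}}w$, not just kernel decay.
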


Very recently (c.f. \cite{2015arXiv150708568P}) the authors have obtained
a quantitative version of the endpoint estimate for arbitrary weights,
namely Theorem \ref{Thm:EndPCual}. This result is analogous to the
one obtained by the first author and T. Hytönen for singular integrals in \cite{MR3092729}.
\begin{thm}
%[C. Pérez, I. P. Rivera-Ríos]
Let $T$ be a Calderón-Zygmund operator
and $b\in BMO$. If $w\geq0$ is a weight then, for every $\varepsilon>0$
\[
w\left(\left\{ x\in\mathbb{R}^{n}\,:\,|[b,T]f(x)|>\lambda\right\} \right)\leq \frac{c}{\varepsilon^{2}}\int_{\mathbb{R}^{n}}\Phi\left(\|b\|_{BMO}\frac{|f|}{\lambda}\right)M_{L\left(\log L\right)^{1+\varepsilon}}wdx.
\]

\end{thm}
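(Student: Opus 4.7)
The plan is to follow the Calder\'on--Zygmund-style scheme used in \cite{MR1827073} to prove Theorem \ref{Thm:EndPCual}, but with the $\varepsilon$-dependence tracked at every step. After normalizing $\|b\|_{BMO}=1$, $\lambda = 1$, and $f\geq 0$, I would perform the Calder\'on--Zygmund decomposition of $f$ at height $1$, producing disjoint dyadic cubes $\{Q_j\}$ and the splitting $f = g + h$ with $h = \sum_j h_j$, each $h_j = (f - f_{Q_j})\chi_{Q_j}$ of mean zero. Setting $\widetilde{\Omega} = \bigcup_j 3Q_j$, the trivial bound $w(\widetilde{\Omega}) \lesssim \int \Phi(|f|)\,Mw\,dx$ disposes of the localized piece, so the task reduces to estimating $w\bigl(\{x\notin\widetilde{\Omega}: |[b,T]f(x)| > 1/2\}\bigr)$.

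The next step uses the fundamental identity
\[
[b,T]h_j(x) = (b(x) - b_{Q_j})\,Th_j(x) - T\bigl((b - b_{Q_j})h_j\bigr)(x),
\]
which splits the bad part into two pieces. The first has cancellation: the mean-zero property of $h_j$ together with the Calder\'on--Zygmund kernel smoothness yields, after absorbing $b - b_{Q_j}$ via a dyadic $BMO$ oscillation control, a term bounded by $\int |f|\,Mw\,dx$. The second has no cancellation, and it is this term that forces $\Phi(t) = t\log(e+t)$: one applies Chebyshev's inequality on $\mathbb{R}^n\setminus\widetilde{\Omega}$, then the generalized H\"older inequality
\[
\frac{1}{|Q|}\int_Q |uv|\,dx \lesssim \|u\|_{\exp L,Q}\,\|v\|_{L(\log L),Q}
\]
with $u = b - b_{Q_j}$ (controlled by John--Nirenberg) and $v = h_j$, producing an $L\log L$ average of $f$ on each $Q_j$ and hence the Orlicz bump $\Phi$. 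The contribution of $g$ is handled by Chebyshev together with a linearized $L^{2}(w)$ estimate for $[b,T]$.

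The $c/\varepsilon^{2}$ factor arises from two independent quantitative losses. The first $1/\varepsilon$ appears when controlling the resulting sum $\sum_j w(3Q_j)\,\phi_j$ by an integral against $M_{L(\log L)^{1+\varepsilon}}w$; this is the same self-improvement argument that gives the $1/\varepsilon$ in the singular-integral case of Hyt\"onen and the first author in \cite{MR3092729}. The second $1/\varepsilon$ arises from a refined John--Nirenberg estimate of the form $\|b - b_Q\|_{\exp L^{1/(1+\varepsilon)},Q} \lesssim 1$, used through the Young-type pairing between $\exp L^{1/(1+\varepsilon)}$ and $L(\log L)^{1+\varepsilon}$ that must match the exponent appearing in the Orlicz weight bump. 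Combining the two losses produces the claimed $\varepsilon^{-2}$.

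The main obstacle is precisely this bookkeeping: the commutator structure forces two Orlicz--H\"older pairings to appear in the same chain of estimates, one for the $b - b_{Q_j}$ factor and one for the conversion of local $w$-averages into $M_{L(\log L)^{1+\varepsilon}}w$. Their exponents must be balanced so that each contributes exactly $\varepsilon$ and they combine into the single Orlicz maximal function on the right-hand side, without an extra logarithm or an additional $\varepsilon^{-1}$. This is the step where the singular-integral argument of \cite{MR3092729} cannot be imported verbatim, since the extra $\log$ in $\Phi$ couples the two reductions and makes the sharp $\varepsilon^{-2}$ scaling genuinely two-parametric.
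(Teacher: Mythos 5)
You should first note that this paper does not actually prove the theorem: it is quoted from the authors' quantitative endpoint paper \cite{2015arXiv150708568P}, so your sketch can only be measured against that argument. Your skeleton is the right one and matches it in spirit: Calder\'on--Zygmund decomposition at height $\lambda$, the splitting $[b,T]h_j=(b-b_{Q_j})Th_j-T\bigl((b-b_{Q_j})h_j\bigr)$, the $\exp L$ versus $L\log L$ generalized H\"older inequality on the cubes, and the quantitative Fefferman--Stein type input with the $\varepsilon^{-1}$ factor from \cite{MR3092729}. However, two steps of your outline have genuine gaps. First, the good part: since $w\geq 0$ is an arbitrary weight, there is no ``linearized $L^{2}(w)$ estimate for $[b,T]$'' to invoke; what is available is a two-weight Coifman--Fefferman type bound of the form $\int|[b,T]g|^{p}w\lesssim\int|g|^{p}M_{L(\log L)^{2p-1+\delta}}w$, and at $p=2$ the bump $(\log L)^{3+\delta}$ is strictly larger than the target $(\log L)^{1+\varepsilon}$, so Chebyshev at $p=2$ does not close the argument. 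One has to work with $p=1+O(\varepsilon)$ (or an equivalent quantitative device) and track the blow-up of the constants as $p\to1$ and $\delta\to0$ --- this is precisely one of the real sources of the $\varepsilon^{-1}$ losses, and your sketch is silent about it.

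Second, your accounting of the factor $\varepsilon^{-2}$ is not substantiated. The proposed ``refined John--Nirenberg estimate'' $\|b-b_Q\|_{\exp L^{1/(1+\varepsilon)},Q}\lesssim\|b\|_{BMO}$ is weaker than the classical one, $\|b-b_Q\|_{\exp L,Q}\lesssim\|b\|_{BMO}$, because $\exp L^{1/(1+\varepsilon)}$ is a larger space; it holds with a constant independent of $\varepsilon$ and therefore generates no loss at all. In the actual proof the second $\varepsilon^{-1}$ arises elsewhere: after applying the $\varepsilon^{-1}$-endpoint bound for $T$ to $\sum_j(b-b_{Q_j})h_j$, one must convert the resulting local integrals of $|b-b_{Q_j}|\,|h_j|\,M_{L(\log L)^{\varepsilon}}w$ into $\|f\|_{L\log L,Q_j}\,|Q_j|\,\inf_{Q_j}M_{L(\log L)^{1+\varepsilon}}w$, and it is in these quantitative local Orlicz maximal-function estimates (together with the good-part constants) that the extra $\varepsilon^{-1}$ appears. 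A minor further inaccuracy: the term $(b(\cdot)-b_{Q_j})Th_j$ cannot be bounded by $\int|f|\,Mw$; pairing $|b-b_{Q_j}|$ with $w$ over the annuli costs an $L\log L$ average of $w$, so the correct bound is $\int|f|\,M_{L\log L}w$, which still suffices for the stated inequality but should be recorded as such.
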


The main novelty here is the appearance of the sharp factor \,$ \frac{1}{\varepsilon^{2}}$\,  reflecting again the higher singularity of the operator.    As a corollary of this result we can derive the following result obtained previously by C. Ortiz-Caraballo in \cite{MR3008263}, 
$$ 
w\left(\left\{ x\in\mathbb{R}^{n}\,:\,|[b,T]f(x)|>\lambda\right\} \right)\leq C\Phi\left([w]_{A_{1}}\right)^{2} 
\int_{\mathbb{R}^{n}} 
\Phi\left (\|b\|_{BMO}\frac{|f|}{\lambda} \right) \,wdx. 
$$

We remark that it seems that the conjugation method cannot be applied to prove this estimate. Therefore, estimate (\ref{LogL}) or Theorem \ref{Thm:EndPCual} works, so far, for Calder\'on-Zygmund operators not for general linear operators assuming a minimal appropriate weighted weak type estimate.

Another interesting difference between Calder\'on-Zygmund operators and commutators concerns their local behavior. A very nice way of expressing this is by means of the following estimate due to Karagulyan \cite{MR1913610}: there exists a constant $c>0$ such that for each cube $Q$  and for each function $f$ supported on the cube $Q$ 
\begin{equation}\label{Subgausiandecay}
\frac{1}{|Q|} |\{x\in Q: |Tf(x)|> tMf(x)\}|\le c\, e^{-c\, t}\qquad t>0.
\end{equation}
This result can be seen as an improvement of Buckley's exponential decay theorem \cite{MR1124164} which is a very useful result.  For instance, it allows to improve in a quantitative way the classical good-$\lambda$ inequality between $T$ and $M$: if $p\in (0,\infty)$ and $w\in A_{\infty}$
\[
\|Tf\|_{L^{p}(w)}\leq c_T\,p\,[w]_{{A_{\infty}}} \|M(f)\|_{L^{p}(w)}. 
\]

Motivated by this result of Karagulyan, Ortiz-Caraballo,
Rela and the first author developed a new method for proving  (\ref{Subgausiandecay}) in \cite{MR3124931}. This method is flexible enough to deal 
with other operators including the commutators. In particular, we have the following sub-gaussian estimate.  

\begin{thm}\label{OPRthm}
Let $T$ be a Calder\'on-Zygmund operator and $b\in BMO$, then there exists  a constant $c>0$ 
such that for each $f$ 
\begin{equation}
\sup_{Q}\frac{1}{|Q|}|\{x\in Q:|[b,T](f\chi_Q)(x)|>tM^{2}f(x)\}|\leq c\,e^{-\sqrt{c\,t\|b\|_{BMO}}}\qquad t>0.\label{OPRestimate}
\end{equation}
\end{thm}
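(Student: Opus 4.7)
The plan is to decompose the commutator in the standard way as
\[
[b,T](f\chi_Q)(x) \;=\; (b(x)-b_Q)\,T(f\chi_Q)(x) \;-\; T\bigl((b-b_Q)f\chi_Q\bigr)(x),
\]
and then to combine Karagulyan's exponential decay \eqref{Subgausiandecay} for $T$ with the John--Nirenberg exponential decay of $|b-b_Q|$, balancing two free parameters in order to manufacture the square root in the exponent.

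The exceptional set on the left of \eqref{OPRestimate} is contained in the union of the two sets where each summand exceeds $\tfrac{t}{2}M^2 f(x)$. For the second summand, setting $g:=(b-b_Q)f\chi_Q$, a generalized H\"older/John--Nirenberg argument yields, for every $x\in Q$,
\[
Mg(x)\;\leq\; c\,\|b\|_{BMO}\,M_{L\log L} f(x)\;\leq\; c\,\|b\|_{BMO}\,M^2 f(x),
\]
using the classical pointwise equivalence $M_{L\log L}\simeq M^2$. Hence $|Tg(x)|>\tfrac{t}{2}M^2 f(x)$ forces $|Tg(x)|\geq (ct/\|b\|_{BMO})\,Mg(x)$, and \eqref{Subgausiandecay} applied to $g$ on $Q$ delivers
\[
\bigl|\{x\in Q:|Tg(x)|>\tfrac{t}{2}M^2 f(x)\}\bigr|\;\leq\; c\,|Q|\,e^{-c\,t/\|b\|_{BMO}},
\]
which is already stronger than the announced subgaussian bound.

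For the remaining summand $(b-b_Q)T(f\chi_Q)$ I would introduce a free parameter $s>0$ and split $Q$ according to whether $|b(x)-b_Q|$ is above or below $s\|b\|_{BMO}$. John--Nirenberg bounds the first piece by $c|Q|e^{-cs}$. On the complement the product inequality forces
\[
|T(f\chi_Q)(x)|\;>\;\tfrac{t}{2s\|b\|_{BMO}}\,M(f\chi_Q)(x)
\]
(using $M^2 f\geq M(f\chi_Q)$), and a second application of \eqref{Subgausiandecay} bounds this set by $c|Q|e^{-c\,t/(s\|b\|_{BMO})}$. Equating $cs$ with $c\,t/(s\|b\|_{BMO})$ produces the balanced choice $s\sim\sqrt{t/\|b\|_{BMO}}$, and hence the square-root exponent claimed in \eqref{OPRestimate}; the exponential tail from the previous step is absorbed into it.

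The main technical obstacle is the pointwise $L\log L$ control $M((b-b_Q)f\chi_Q)(x)\leq c\|b\|_{BMO}M^2 f(x)$ for $x\in Q$. To establish it one splits the supremum over cubes $R\ni x$ into the cases $R\subseteq Q$ and $R\supsetneq Q$; on small scales one couples the generalized H\"older inequality in the Orlicz pair $(\exp L, L\log L)$ with the oscillation bound $|b_R-b_Q|\leq c\|b\|_{BMO}\bigl(1+\log(|Q|/|R|)\bigr)$, absorbing the extra logarithm into the $L\log L$ maximal function. It is precisely this absorption that forces $M^2$ rather than $M$ on the right-hand side, mirroring the analogous phenomenon behind the pointwise inequality \eqref{EstimateJFA}.
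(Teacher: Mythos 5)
The paper does not actually prove Theorem \ref{OPRthm}; it quotes it from \cite{MR3124931}, so your proposal has to stand on its own. Your handling of the first summand $(b-b_Q)T(f\chi_Q)$ does stand: splitting $Q$ at height $s\|b\|_{BMO}$, using John--Nirenberg on the large-oscillation set, applying (\ref{Subgausiandecay}) to $f\chi_Q$ on the complement (with $M(f\chi_Q)\leq M^2f$), and balancing $s\simeq\sqrt{t/\|b\|_{BMO}}$ is a correct and standard way to generate the square root.

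The gap is in the second summand. The pointwise lemma you rely on, $M\bigl((b-b_Q)f\chi_Q\bigr)(x)\leq c\|b\|_{BMO}M_{L\log L}f(x)$ for $x\in Q$, is false: take $f=\chi_Q$, so that $M_{L\log L}f\simeq M^2f\simeq 1$ on $Q$, while by Lebesgue differentiation $M\bigl((b-b_Q)\chi_Q\bigr)(x)\geq|b(x)-b_Q|$ a.e., which is unbounded on $Q$ for any unbounded $b\in BMO$. This is precisely where the ``absorption'' you flag breaks down: for $R\subseteq Q$ the piece $|b_R-b_Q|\,\frac{1}{|R|}\int_R|f|$ carries the factor $1+\log(|Q|/|R|)$, and when $f$ is essentially constant on $R$ that factor is dominated neither by $\|f\|_{L\log L,R}$ nor by $M_{L\log L}f(x)$; what is true is only the averaged bound $\frac{1}{|Q|}\int_Q|b-b_Q||f|\leq c\|b\|_{BMO}\|f\|_{L\log L,Q}$, which is far too weak to feed into (\ref{Subgausiandecay}). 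Moreover, the conclusion you draw from the lemma cannot be rescued, because it is itself false: the set $\{x\in Q:|T((b-b_Q)f\chi_Q)(x)|>\tfrac{t}{2}M^2f(x)\}$ need not have exponentially small relative measure. Indeed, with $T=H$, $b(x)=\log|x|$, $Q=(0,1)$, $f=\chi_Q$ (the data of Observation \ref{Thm:SharpLocal}), a computation as in Section \ref{decay} gives $|H((b-b_Q)\chi_Q)(x)|\geq c\left(\log\frac{1}{x}\right)^2$ as $x\to0^+$, while $M^2f\simeq1$ on $Q$; hence that set has measure at least $e^{-c\sqrt{t}}$ for large $t$, so this term is exactly as singular as the commutator itself and at best subgaussian. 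In other words, the decomposition does not isolate the difficulty: bounding $T((b-b_Q)f\chi_Q)$ against $M^2f$ \emph{is} the theorem, and it cannot be reduced to Karagulyan's estimate through a pointwise comparison of $M((b-b_Q)f\chi_Q)$ with $M^2f$, which is why \cite{MR3124931} proceeds along entirely different lines.
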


We will show in Section \ref{decay} that this subexponential decay is fully sharp. In Section \ref{sparse}, we will provide a new proof of (\ref{Subgausiandecay}) based on the pointwise domination: if $T$ is a Calder\'on-Zygmund operator, then it is 
possible to find a finite set of $\eta$-sparse families $\left\{ \mathcal{S}_{j}\right\} _{j=1}^{3^n}$ (see Section \ref{sparse} for the definitions) 
contained in the same or in different dyadic lattices $\mathcal{D}_{j}$
and depending on $f$ such that 
\begin{equation}\label{DominationTheorem}
|Tf(x)|\leq c_{T}\sum_{j=1}^{3^n}A_{\mathcal{S}_{j}}f(x) 
\end{equation}
where
$$A_{\mathcal{S}_{j}}f(x)=\sum_{Q\in\mathcal{S}_{j}}   \frac{1}{|Q|} \int_{Q} |f| \,\chi_{Q}(x).
$$
See Section \ref{sparse} for details,  in particular Theorem \ref{DominationThm}.

In view of the interest of an estimate like (\ref{DominationTheorem}) it would be relevant to produce a  counterpart for commutators. The ``natural'' sparse operator for these commutators would be
\[
B_{\mathcal{S}}f(x)=\sum_{Q\in\mathcal{S}}\, \|f\|_{\strt{1.7ex} L\log L,Q}\, \chi_{Q}(x).
\]
The reason that leads to consider this sparse operator in terms of the average\\ $\| \cdot \|_{\strt{1.7ex} L\log L,Q}$ is due to the intimate relationship of commutators and $M^2$ which is an operator pointwise equivalent to $M_{\strt{1.5ex} L\log L}$. In Section \ref{sparse} we prove the impossibility of having a domination theorem for commutators by these  ``sparse'' operators.

%\note{WHY IS THIS NATURAL? EXPLAIN}

%The outline of the paper is at follows. In Section \ref{decay} we prove the first observation related to the sharpness of the local subexponential decay of the commutator which was established in \cite{MR3124931}.  In Section \ref{sparse} we prove the second observation showing the impossibility of having a domination theorem for commutators by ``sparse'' operators.  We provide two proofs of the failure of the control by a finite sum of sparse operators 
%\[ A_{\mathcal{S}}f(x)=\sum_{Q\in\mathcal{S}}\|f\|_{L\log L,Q}\chi_{Q}(x). \]
%
%Finally, in Section  \ref{negativeEstimateConjugationMethod} we show the failure of the weak type $(1,1)$ property of the $T_{w}$ when $w\in A_{p}$, $p>1$. 

\section{First observation:  Sharpness of the subexponential local decay} \label{decay}

We prove in this section that Theorem \ref{OPRthm} is sharp, i.e., we can
find a Calderón-Zygmund operator $T$, a symbol $b\in BMO$ a function
$f$ and a cube $Q$ such that 
\[
\frac{1}{|Q|}|\{x\in Q:|[b,T]f(x)|>tM^{2}f(x)\}|\geq c\,e^{-\sqrt{c\,t\|b\|_{BMO}}}
\]
for some constant $c>0$. More precisely we have the following.

%\note{{\color{red} Entorno observation creado}}

\begin{observation}
\label{Thm:SharpLocal}Let $b(x)=\log|x|$, then  we can find a constant $c>0$ such that 
\[
|\{x\in(0,1):|[b ,H](\chi_{(0,1) })(x) |>t\}|\geq e^{-\sqrt{ct}}
\]
where $H$ stands for the Hilbert transform.
\end{observation}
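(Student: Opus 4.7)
The plan is to compute $[b,H](\chi_{(0,1)})(x)$ explicitly (or at least estimate it from below) for small $x \in (0,1)$, and to show that this quantity grows like $(\log(1/x))^2$. Once we have this, inverting the relation gives the stated level-set bound. Of course $b(x)=\log|x|$ is a well-known element of $BMO(\mathbb{R})$, so the hypotheses are legitimate.

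\medskip

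\textbf{Step 1.} For $x\in(0,1)$ write
\[
[b,H](\chi_{(0,1)})(x)=\frac{1}{\pi}\,\mathrm{p.v.}\int_{0}^{1}\frac{\log x-\log y}{x-y}\,dy.
\]
Because $\log$ is monotone and smooth on $(0,\infty)$, the difference quotient $\frac{\log x-\log y}{x-y}$ is positive and has a removable singularity at $y=x$ (with value $1/x$). Hence the principal value is an honest, positive, Lebesgue integral; call it $I(x)$.

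\textbf{Step 2.} Substitute $y=xt$ to obtain
\[
I(x)=\int_{0}^{1/x}\frac{-\log t}{1-t}\,dt=\int_{0}^{1}\frac{-\log t}{1-t}\,dt+\int_{1}^{1/x}\frac{-\log t}{1-t}\,dt.
\]
The first piece is the classical value $\pi^{2}/6$. For the second piece, substitute $u=1/t$ and use the partial-fraction expansion $\frac{1}{u(1-u)}=\frac{1}{u}+\frac{1}{1-u}$:
\[
\int_{1}^{1/x}\frac{-\log t}{1-t}\,dt=\int_{x}^{1}\frac{-\log u}{u(1-u)}\,du=\frac{(\log(1/x))^{2}}{2}+\int_{x}^{1}\frac{-\log u}{1-u}\,du.
\]
The last integral is bounded by $\pi^{2}/6$ uniformly in $x\in(0,1)$. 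Therefore
\[
I(x)=\tfrac12\bigl(\log(1/x)\bigr)^{2}+O(1)\qquad(x\to 0^{+}),
\]
and in particular there exist $C_{0}>0$ and $x_{0}\in(0,1)$ such that $[b,H](\chi_{(0,1)})(x)\geq \frac{1}{4\pi}(\log(1/x))^{2}$ for all $x\in(0,x_{0})$.

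\textbf{Step 3.} Given $t>0$ (large enough that $\tfrac{1}{4\pi}(\log(1/x_{0}))^{2}\geq t$ fails trivially, and otherwise absorbed into $c$), the inequality $\frac{1}{4\pi}(\log(1/x))^{2}>t$ is equivalent to $x<\exp(-\sqrt{4\pi t})$. Consequently
\[
\bigl|\{x\in(0,1):|[b,H](\chi_{(0,1)})(x)|>t\}\bigr|\geq e^{-\sqrt{4\pi t}},
\]
which is the desired bound (with $c=4\pi$ for large $t$, and any larger constant to cover the bounded range of small $t$).

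\medskip

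There is no real obstacle: the whole argument is an explicit calculation exploiting the fact that $\log|x|$ is, up to constants, an eigenfunction of $H$ on the half-line and that its commutator with $H$ acting on an indicator produces an extra logarithmic factor, yielding the characteristic $(\log(1/x))^{2}$ singularity. The modest subtlety is recognizing from the start that this $\log$-squared blow-up is exactly what matches the $e^{-\sqrt{ct}}$ decay in \eqref{OPRestimate} and therefore forces the subgaussian rate in Theorem \ref{OPRthm} to be sharp.
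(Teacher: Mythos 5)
Your argument is correct and follows essentially the same route as the paper: the substitution $y=xt$, the split of $\int_0^{1/x}\frac{\log(1/t)}{1-t}\,dt$ at $t=1$ using positivity of the integrand, the asymptotic $\tfrac12(\log(1/x))^2+O(1)$, and the inversion to get the level-set bound for large $t$ (absorbing small $t$ into the constant). The only difference is that you carry out explicitly, via $u=1/t$ and partial fractions, the step the paper dismisses as ``a computation shows,'' which is a welcome bit of extra detail but not a different proof.
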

\begin{proof}
Let $f(x)=\chi_{(0,1)}(x).$ We are going to show that 
\[
|\{x\in(0,1):|[b,H]f(x)|>tM^{2}f(x)\}|=|\{x\in(0,1):|[b,H]f(x)|>t\}|\geq c\,e^{-\sqrt{\alpha\,t}}\qquad t>0.
\]
For $x\in(0,1)$ we have that 
\[
[b,H]f(x)=\int_{0}^{1}\frac{\log(x)-\log(y)}{x-y}\,dy=\int_{0}^{1}\frac{\log(\frac{x}{y})}{x-y}\,dy=\int_{0}^{1/x} \frac{\log(\frac{1}{t})}{1-t}\,dt.
\]
Now we observe that 
\[
\int_{0}^{1/x}\frac{\log(\frac{1}{t})}{1-t}\,dt=\int_{0}^{1}\frac{\log(\frac{1}{t})}{1-t}\,dt+\int_{1}^{1/x}\frac{\log(\frac{1}{t})}{1-t}\,dt
\]
and since $\frac{\log(\frac{1}{t})}{1-t}$ is positive for $(0,1)\cup(1,\infty)$
we have for $0<x<1$ that 
\[
|[b,H]f(x)|>\int_{1}^{1/x}\frac{\log(\frac{1}{t})}{1-t}\,dt.
\]
Finally, a computation shows that 
$$
\int_{1}^{1/x}\frac{\log(\frac{1}{t})}{1-t}\,dt  \approx \left(\log\frac{1}{x}\right)^{2} \qquad x\to 0.
$$
%
%We know that 
%\[ \int_{1}^{\infty}\frac{\log t}{t-1}\,dt=\infty \] and it's clear that
%\[ \lim_{L\to\infty}\frac{\int_{1}^{L}\frac{\log(\frac{1}{t})}{1-t}\,dt}{(\log L)^{2}}=c_{1}>0. \]
Consequently, we have that for some $x_{0}<1$ 
\[
|[b,H]f(x)|>c\,\left(\log\frac{1}{x}\right)^{2}\qquad0<x<x_{0}.
\]
and then for some $t_0>0$,  
\begin{equation}\label{cotafinal}
\left| \{x\in(0,1):|[b,H]f(x)|>t\} \right| \geq  \left|\left\{x\in(0,x_{0}):c\,\left(\log\frac{1}{x}\right)^{2}>t\right\}\right|=e^{-\sqrt{t/c}} \qquad t>t_{0} 
\end{equation}
as we wanted to prove.
\end{proof}

\section{Second observation: a ``natural'' but false sparse domination result for commutators} \label{sparse}

%\note{{\color{red}  Meter una introduccion -> Diría que con lo que se habla en la intro va bien... }}

Before stating the result we are going to prove in this section we
need some notation. We borrow it from \cite{2015arXiv150805639L}.
\begin{defn}
[Dyadic child]Let $Q$ be a cube (with sides parallel to the axis).
We call dyadic child any of the $2^{n}$ cubes obtained by partitioning
$Q$ by $n$ ``median hyperplanes'' (planes parallel to the faces
of $Q$ and dividing each edge into 2 equal parts).
\end{defn}
If we iterate the partition process of the preceding definition we
obtain a standard dyadic lattice $\mathcal{D}(Q)$ of subcubes of
$Q$ which has the usual properties:
\begin{enumerate}
\item For each $k=0,1,2,\dots$ cubes in the $k$-th generation have sidelength
$2^{-k}$ and tile $Q$ in a regular way.
\item Each $Q'$ in the $k$-th generation has $2^{n}$ children in the
in the $(k+1)$-th generation contained in it and one and only one
parent in the $(k-1)$-th generation containing it (unless it is $Q$
itself).
\item If $Q',Q''\in\mathcal{D}(Q)$, then $Q'\cap Q''=\emptyset$ or $Q'\subseteq Q''$
or $Q''\subseteq Q'$.
\item If $Q'\in\mathcal{D}(Q)$, then $\mathcal{D}(Q')\subseteq\mathcal{D}(Q)$. \end{enumerate}
\begin{defn}
[Dyadic lattice]A dyadic lattice $\mathcal{D}$ in $\mathbb{R}^{n}$
is any collection of cubes such that 
\begin{lyxlist}{00.00.0000}
\item [{(DL-1)}] If $Q\in\mathcal{D}$ then each dyadic child of $Q$
is in $\mathcal{D}$ as well.
\item [{(DL-2)}] If $Q',Q''\in\mathcal{D}$ there exists $Q\in\mathcal{D}$
such that $Q',Q''\in\mathcal{D}(Q)$.
\item [{(DL-3)}] If $K$ is a compact set of $\mathbb{R}^{n}$ there exists
$Q\in\mathcal{D}$ such that $K\subseteq Q$.
\end{lyxlist}
\end{defn}
There is an easy way to build a dyadic lattice by considering a
increasing sequence of dyadic cubes $Q_{j}$ such that $\cup_{j=1}^{\infty}Q_{j}=\mathbb{R}^{n}$.
Then 
\[
\mathcal{D}=\bigcup_{j=1}^{\infty}\mathcal{D}(Q_{j})
\]
is a dyadic lattice.

\begin{defn}\label{DefE(Q}
Let $\eta\in(0,1)$. We say that a family of cubes $\mathcal{S}\subseteq\mathcal{D}$
is $\eta$-sparse if for each $Q\in\mathcal{S}$ we can find a measurable
subset $E(Q)\subset Q$ such that:
\begin{enumerate}
\item $E(Q)$'s are pairwise disjoint.
\item $\eta|Q|\leq|E(Q)|$
\end{enumerate}
\end{defn}

\begin{defn}
Let $\Lambda>1$. We say a family of cubes $\mathcal{S}$ is $\Lambda$-Carleson
if for every cube $Q\in\mathcal{D}$ we have 
\[
\sum_{P\in\mathcal{S},P\subset Q}|P|\leq\Lambda|Q|.
\]

\end{defn}
There is an interesting relation between Carleson and sparse families
that we summarize in the following lemma
\begin{lem}\label{lem:SparseCarleson}
If $\mathcal{S}$ is a $\Lambda$-Carleson family of cubes then it
is $\frac{1}{\Lambda}$-sparse. Conversely if $\mathcal{S}$ is a
$\eta$-sparse family of cubes then it is a $\frac{1}{\eta}$-Carleson
family of cubes.
\end{lem}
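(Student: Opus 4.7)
The two directions have different flavors: sparse $\Rightarrow$ Carleson is essentially bookkeeping, while Carleson $\Rightarrow$ sparse requires building the disjoint sets $E(Q)$ by a recursive selection.

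For $\eta$-sparse $\Rightarrow$ $\frac{1}{\eta}$-Carleson, fix any $Q \in \mathcal{D}$. The sets $E(P)$ with $P \in \mathcal{S}$, $P \subseteq Q$ are pairwise disjoint subsets of $Q$, so, using $|P| \leq \eta^{-1}|E(P)|$,
\[
\sum_{P \in \mathcal{S},\, P \subseteq Q} |P| \;\leq\; \frac{1}{\eta}\sum_{P \in \mathcal{S},\, P \subseteq Q}|E(P)| \;\leq\; \frac{|Q|}{\eta},
\]
which is the $\frac{1}{\eta}$-Carleson condition.

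For the converse, I plan to construct the sets $E(P)$ recursively, processing the cubes of $\mathcal{S}$ in order of increasing side length. For $P \in \mathcal{S}$, supposing the selections $E(P')$ with $|E(P')| = |P'|/\Lambda$ have already been made for every smaller $P' \in \mathcal{S}$, choose
\[
E(P) \subseteq P \setminus \bigcup_{P' \in \mathcal{S},\, P' \subsetneq P} E(P'), \qquad |E(P)| = \frac{|P|}{\Lambda}.
\]
This is feasible: applying the $\Lambda$-Carleson condition to $Q = P$ gives $\sum_{P' \in \mathcal{S},\, P' \subsetneq P} |P'| \leq (\Lambda - 1)|P|$, and by the inductive hypothesis together with the disjointness of the previously selected $E(P')$,
\[
\Bigl| \bigcup_{P' \in \mathcal{S},\, P' \subsetneq P} E(P') \Bigr|
\;=\; \frac{1}{\Lambda} \sum_{P' \in \mathcal{S},\, P' \subsetneq P} |P'|
\;\leq\; \frac{\Lambda - 1}{\Lambda}|P|,
\]
so at least $|P|/\Lambda$ of measure remains inside $P$ from which to pick $E(P)$. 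Disjointness of $E(P)$ from any $E(P')$ with $P'$ disjoint from $P$ is automatic, and cubes strictly containing $P$ are processed later and will in turn exclude $E(P)$.

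The main obstacle is that when $\mathcal{S}$ contains cubes of arbitrarily small side length there is no smallest cube from which to begin the recursion. I would circumvent this by first running the argument on the truncated family $\mathcal{S}_N = \{P \in \mathcal{S} : \ell(P) \geq 2^{-N}\}$, which inherits the $\Lambda$-Carleson property and has only finitely many distinct side lengths, and then passing to the limit via a standard diagonal/weak-$*$ compactness argument in $L^{\infty}$ to extract a consistent family $\{E(P)\}_{P \in \mathcal{S}}$ preserving both the lower bound $|E(P)| \geq |P|/\Lambda$ and pairwise disjointness.
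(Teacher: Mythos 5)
The paper states this lemma without proof (it is quoted from the Lerner--Nazarov notes it cites), so the comparison is with the standard argument there. Your first direction (sparse implies Carleson) is correct. Your bottom-up greedy selection for the truncated families $\mathcal{S}_N$ is also correct and is essentially the standard construction when every cube has only finitely many scales of $\mathcal{S}$ below it: applying the Carleson condition to $Q=P$ (with $P$ itself counted in the sum) shows that after deleting the previously selected sets there is still measure at least $|P|/\Lambda$ left inside $P$, and cubes of equal sidelength or disjoint from $P$ cause no interference in a dyadic lattice.

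The genuine gap is the passage from $\mathcal{S}_N$ to $\mathcal{S}$. The selections you build for $\mathcal{S}_N$ are not consistent as $N$ grows: when smaller cubes enter the family they must be processed first, and this in general forces the sets $E(P)$ of \emph{all} larger cubes to be re-chosen, so there is no monotone structure for a diagonal argument to latch onto. Weak-$*$ compactness in $L^\infty$ does not repair this: along a subsequence the indicators $\chi_{E_N(P)}$ only converge to some functions $0\le g_P\le \chi_P$ with $\int g_P\ge |P|/\Lambda$ and $\sum_P g_P\le 1$; a weak-$*$ limit of indicators need not be an indicator, and pairwise disjointness of sets is not a weak-$*$ closed condition. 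What this argument actually yields is the \emph{fractional} (generalized) sparseness of $\mathcal{S}$, and upgrading such a fractional assignment to honest pairwise disjoint sets $E(P)\subset P$ with $|E(P)|\ge |P|/\Lambda$ is precisely the nontrivial content of the Carleson-implies-sparse direction. In the literature this is done either via a measure-theoretic version of Hall's marriage theorem (Bollob\'as--Varopoulos, as in H\"anninen's work on the equivalence of sparse and Carleson constants) or by the more hands-on exhaustion argument of Lerner--Nazarov, which is designed exactly so that the choices are stable under enlarging the family. So either invoke such a tool to convert your weak-$*$ limit into sets, or replace the ``standard diagonal argument'' by a construction with built-in consistency; as written, that last step is not standard and does not go through.
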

Armed with all these definitions we can state  the following pointwise domination theorem.

\begin{thm} \label{DominationThm}
Let $T$ be a Calderón-Zygmund operator. There is a finite set of $\eta$-sparse families $\left\{ \mathcal{S}_{j}\right\} _{j=1}^{3^n}$
contained in the same or in different dyadic lattices $\mathcal{D}_{j}$
and depending on $f$ such that 
\begin{equation} 
T^{*}f(x)\leq c_{T,n}\sum_{j=1}^{3^n}A_{\mathcal{S}_{j}}f(x) 
\end{equation}
where $A_{\mathcal{S}_{j}}f(x)=\sum_{Q\in\mathcal{S}_{j}}   \frac{1}{|Q|}\int_Q |f| \, \chi_{Q}(x)$.
\end{thm}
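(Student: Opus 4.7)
The plan is to follow the Lerner--Nazarov strategy from \cite{2015arXiv150805639L}, splitting the argument into a local recursive construction on a fixed cube and a gluing step via the three-lattice trick.

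\emph{Stage 1 (local sparse estimate).} Fix a cube $Q_{0}$ and write $\langle f\rangle_{R}:=\frac{1}{|R|}\int_{R}|f|$. I would build a $\tfrac{1}{2}$-sparse subfamily $\mathcal{F}(Q_{0})\subset\mathcal{D}(Q_{0})$ such that
\[
|T^{*}(f\chi_{3Q_{0}})(x)|\,\chi_{Q_{0}}(x)\le c_{T,n}\sum_{Q\in\mathcal{F}(Q_{0})}\langle f\rangle_{3Q}\,\chi_{Q}(x).
\]
The construction is recursive. Start with $Q_{0}$ in the first generation, and for each already-selected cube $Q$ define
\[
E(Q):=\{x\in Q:M(f\chi_{3Q})(x)>C\langle f\rangle_{3Q}\}\cup\{x\in Q:T^{*}(f\chi_{3Q})(x)>C\langle f\rangle_{3Q}\}.
\]
Using the weak-type $(1,1)$ bounds for $M$ and $T^{*}$, choose $C$ large enough that $|E(Q)|\le\tfrac{1}{4}|Q|$. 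A standard Calder\'on--Zygmund stopping-time then produces disjoint dyadic cubes $\{Q_{j}\}\subset\mathcal{D}(Q)$ covering $E(Q)$ a.e.\ with $\sum_{j}|Q_{j}|\le 2|E(Q)|\le\tfrac{1}{2}|Q|$; these become the next generation. Setting $E^{*}(Q):=Q\setminus\bigcup_{j}Q_{j}$ gives $|E^{*}(Q)|\ge\tfrac{1}{2}|Q|$, and the sets $E^{*}(Q)$ are pairwise disjoint across $\mathcal{F}(Q_{0})$, which furnishes the $\tfrac{1}{2}$-sparsity required in Definition \ref{DefE(Q}.

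\emph{Stage 2 (pointwise propagation).} The pointwise inequality is verified by induction on generations. For $x\in E^{*}(Q)\subset Q\setminus E(Q)$, the definition of $E(Q)$ directly gives $T^{*}(f\chi_{3Q})(x)\le C\langle f\rangle_{3Q}$, which is absorbed by the term corresponding to $Q$ in the sparse sum. For $x$ in a selected child $Q_{j}$ one splits $f\chi_{3Q}=f\chi_{3Q_{j}}+f\chi_{3Q\setminus 3Q_{j}}$; the first piece is controlled by the inductive hypothesis applied to $Q_{j}$, while the tail $T^{*}(f\chi_{3Q\setminus 3Q_{j}})(x)$ must be shown to be pointwise $\le C\langle f\rangle_{3Q}$ on $Q_{j}$. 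This is done by comparing $x$ with a reference point $x^{*}\in Q_{j}\setminus E(Q)$ (which exists since the child was selected so that $|Q_{j}\cap E^{c}|>0$) and invoking the H\"older regularity of the Calder\'on--Zygmund kernel together with the maximal-truncation structure of $T^{*}$. I expect this tail estimate to be the principal technical obstacle, as it is where the kernel smoothness, rather than merely the weak-type boundedness, is essentially used.

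\emph{Stage 3 (global assembly via the three-lattice trick).} To pass from the localized estimate to a bound on $T^{*}f$ over $\mathbb{R}^{n}$, I invoke the classical fact that there exist $3^{n}$ shifted dyadic lattices $\mathcal{D}_{1},\dots,\mathcal{D}_{3^{n}}$ such that every cube $Q\subset\mathbb{R}^{n}$ is contained in some $\tilde Q\in\mathcal{D}_{j}$ with $\ell(\tilde Q)\le 3\ell(Q)$. The trick is used twice: once to exhaust $\mathbb{R}^{n}$ by an increasing sequence of top cubes in each $\mathcal{D}_{j}$ so that Stage 1 applies on cubes of arbitrary scale, and once to replace each non-dyadic $3Q$-average in the local bound by an average over a dyadic cube in some $\mathcal{D}_{j}$, at the cost of the multiplicative constant $3^{n}$. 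Collecting the resulting cubes across the $3^{n}$ lattices yields $3^{n}$ sparse families $\mathcal{S}_{j}\subset\mathcal{D}_{j}$, and adding the corresponding sparse sums produces the claimed pointwise domination.
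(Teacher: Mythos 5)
The paper itself does not prove Theorem \ref{DominationThm}: it defers to \cite{2015arXiv150805639L} and \cite{2014arXiv1409.4351C} (and the related works of Lacey and Hyt\"onen--Roncal--Tapiola). Your outline follows exactly that standard route (local recursive sparse estimate via a stopping time on level sets of $M$ and the truncations, Cotlar-type tail comparison using kernel smoothness, then the three-lattice trick to globalize), so in spirit it is the ``same'' proof as the one the paper points to, and Stages 1 and 3 are the right skeleton. Your identification of the tail estimate for $T^{*}(f\chi_{3Q\setminus 3Q_{j}})$ as the place where H\"older regularity of the kernel (and not just weak $(1,1)$) enters is also correct; that comparison with a reference point is standard and does go through.

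There is, however, one concrete gap in Stage 1/2 as written: the existence of the reference point $x^{*}\in Q_{j}\setminus E(Q)$ does not follow from your selection. If the $Q_{j}$ are chosen as maximal dyadic cubes with $|Q_{j}\cap E(Q)|>\tfrac{1}{2}|Q_{j}|$ (which is what your bound $\sum_{j}|Q_{j}|\le 2|E(Q)|$ implicitly presupposes), a selected cube may be entirely contained in $E(Q)$: maximality only gives information about the parent, namely $|\widehat{Q_{j}}\cap E(Q)|\le\tfrac{1}{2}|\widehat{Q_{j}}|$, which translates into $|Q_{j}\cap E(Q)|\le 2^{n-1}|Q_{j}|$ and so says nothing when $n\ge 2$. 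The standard fix (as in the cited references) is to run the stopping time at the low threshold $\tfrac{1}{2^{n+2}}$: maximal cubes with $|Q_{j}\cap E(Q)|>\tfrac{1}{2^{n+2}}|Q_{j}|$ still cover $E(Q)$ a.e.\ by Lebesgue differentiation, satisfy $\sum_{j}|Q_{j}|\le 2^{n+2}|E(Q)|$ (so you must take $C$ large enough that $|E(Q)|\le \tfrac{1}{2^{n+3}}|Q|$, not merely $\tfrac14|Q|$), and by maximality of the parent each selected cube obeys $|Q_{j}\cap E(Q)|\le \tfrac14|Q_{j}|$, which gives a set of positive measure of admissible reference points. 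A smaller remark on Stage 3: the globalization is cleaner via the usual partition trick (finitely many congruent cubes $R_{i}$ with $\supp f\subset 3R_{i}$, so that $T^{*}f=T^{*}(f\chi_{3R_{i}})$ on $R_{i}$), since an ``increasing sequence of top cubes'' forces an additional limiting argument to see that the limit families are still sparse; for general $f$ one then concludes by a standard truncation/limiting step.
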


The proof of this result can be found in \cite{2015arXiv150805639L} and \cite{2014arXiv1409.4351C}. In \cite{2015arXiv150105818L} M. Lacey obtains the same estimate for Calderón-Zygmund operators that satisfy a Dini condition. Recently a fully quantitative version of Lacey's result was obtained in \cite{2015arXiv151005789H} and even more recently this quantitative version has been simplified in \cite{2015arXiv151207247L}.

As a sample of the interest of this result we give a different proof of the exponential estimate (\ref{Subgausiandecay}):  
there exists a constant $c>0$ such that for each cube $Q$  and for each $f$ supported on the cube $Q$ 
\begin{equation}\label{Subgausiandecay2}
\frac{1}{|Q|}\left |\left\{x\in Q: T^*f(x)> tMf(x)\right\}\right| \leq c\, e^{-c\, t}\qquad t>0,
\end{equation}

To prove this result we will use the classical \emph{vector-valued extension of the maximal function} introduced by Fefferman and Stein in \cite{MR0284802} that can be written as follows:
$$\overline{M}_qf(x)=\Big( \sum_{j=1}^{\infty} (Mf_j(x))^q \Big)^{1/q}=|Mf(x)|_q,$$
where ${f}=\{f_j\}_{j=1}^{\infty}$ is a vector--valued function.

Taking into account that $T^*$ is controlled by a finite sum of sparse operators it suffices to establish (\ref{Subgausiandecay2}) for those operators. 

Assume that $\supp f \subseteq Q$ for a certain cube. It is clear that we can find $c_n$ pairwise disjoint cubes $Q_j\in\mathcal{D}$ which union covers  $Q$ and such that $|Q_j| \simeq |Q|$ We can assume those cubes to belong to any sparse family $\mathcal{S}\subset\mathcal{D}$, since it's easy to check, taking into account Lemma \ref{lem:SparseCarleson}, that adding a finite number of pairwise disjoints cubes to a sparse family the resulting family is again a sparse family.

First we are going to prove that if $\mathcal{S}$ is and $Q_j\in\mathcal{S}$ with $|Q_j|\simeq |Q|$, as we have just showed that we can assume, then
\begin{equation}
\frac{1}{|Q|}\left|\left\{x\in Q:   \sum_{P\in\mathcal{S}, \, P\subseteq{Q_j}} \chi_{P}(x)> t \right\}\right| \leq ce^{-ct}\label{ClaimLocal}
\end{equation}

%Indeed, consider the distribution set
%$$S_Q:=\{x\in Q: T^*f(x)> tMf(x) \}.$$
%%
%Then, by Theorem \ref{DominationThm} we have that 
%%
%\begin{eqnarray*}
%\left|S_Q\right|&\le&\left|\left\{x\in Q:  c_{T,n}\sum_{j=1}^{3^n}A_{\mathcal{S}_{j}}f(x)  > tMf(x) \right\}\right|\\
%&\le& \sum_{j=1}^{3^n} |\{x\in Q:  A_{\mathcal{S}_{j}}f(x)  > t(3^nc_{T,n})^{-1}\,Mf(x) \}|\\
%&\le& \sum_{j=1}^{3^n} \left|\left\{x\in Q:   \sum_{Q\in\mathcal{S}_{j}} \chi_{Q}(x)> t(3^nc_{T,n})^{-1} \right\}\right|\\
%%&\le& c_T \,|\{x\in Q:  \sum_{j=1}^{3^n}\sum_{Q\in\mathcal{S}_{j}} \chi_{Q}(x)> t\,c_T\}|
%\end{eqnarray*}
%%
%since trivially, 
%%
%$$
%A_{\mathcal{S}_{j}}f(x)\leq  Mf(x) \sum_{Q\in\mathcal{S}_{j}}     \chi_{Q}(x).
%$$
%%
We begin observing that 
\begin{eqnarray*}
&&\frac{1}{|Q|}\left|\left\{x\in Q:   \sum_{P\in\mathcal{S}, \, P\subseteq{Q_j}} \chi_{P}(x)> t \right\}\right| \\
&=&\frac{1}{|Q|}\left|\left\{x\in Q\cap Q_j:   \sum_{P\in\mathcal{S}, \, P\subseteq{Q_j}} \chi_{P}(x)> t \right\}\right| \\
& \le & c\frac{1}{|Q_j|}\left|\left\{x\in Q_j:   \sum_{P\in\mathcal{S}, \, P\subseteq{Q_j}} \chi_{P}(x)> t \right\}\right| =C_Q
\end{eqnarray*}

We use now one of  the key estimates from \cite{MR3124931}. Indeed, let $\{E(P)\}_{{P\in\mathcal{S}, \, P\subseteq{Q_j}}}$ be the family of sets from Definition \ref{DefE(Q}. We have then for some $c>0$ that
\begin{eqnarray*}
\sum_{P\in\mathcal{S}} \chi_{P}(x)&=&\sum_{Q\in\mathcal{S}}   \left({\frac{1}{|P|}}\,|P|\right)^q\chi_{P}(x)\\
&\leq& c\,\sum_{P\in\mathcal{S}, \, P\subseteq{Q_j}}   \left({\frac{1}{|P|}}\,|E(P)|\right)^q\chi_{P}(x)\\
&\leq& c\,\sum_{P\in\mathcal{S}, \, P\subseteq{Q_j}} \left({\frac{1}{|P|}}\,\int_{P}\chi_{E(P)}(y)\,dy\right)^q\chi_{P}(x)\\
&\leq& c\, \left(\overline{M}_q\left(\left\{\chi_{E(P)}\right\}_{P\in\mathcal{S}, \, P\subseteq{Q_j}}\right)(x)\right)^q\\
&\leq& c\, \left(\overline{M}_qg_j(x)\right)^q,
\end{eqnarray*}
where $g_j=\left\{\chi_{E(P)}\right\}_{P\in\mathcal{S}, \, P\subseteq{Q_j}}$ is supported in $Q_j$.  Now, since $\{E(Q)\}_{P\in\mathcal{S}, \, P\subseteq{Q_j}}$ is a pairwise disjoint family of subsets, we have that for any $j$
\begin{equation}
\|g_j(x)\|_{\ell^{q}}=\left(\sum_{P\in\mathcal{S}, \, P\subseteq{Q_j}}\left(\chi_{E(Q)}(x)\right)^q\right)^{1/q}\leq 1.
\end{equation}
We finish the proof of (\ref{ClaimLocal}) recalling that if $|g_j|_{\ell^{q}}\in L^{\infty}$, then $\left(\overline{M}_qg_j(x)\right)^q\in Exp L$ (see \cite{MR0284802}) from  which we conclude that:
\begin{equation*}
C_Q\leq c e^{-c t},\qquad t>0.
\end{equation*}
Now we go back to the proof of the estimate. We first observe that 
\[\begin{split}&\frac{1}{|Q|}\left |\left\{x\in Q: A_\mathcal{S}f(x)> tMf(x)\right\}\right| \\ \leq &  \sum_{j=1}^{c_n}\frac{1}{|Q|}\left |\left\{x\in Q: A_{\mathcal{S}}\left(f\chi_{Q_j}\right)(x)> \frac{t}{c_n}M(f\chi_{Q_j})(x)\right\}\right|. \end{split}\]

Hence it suffices to obtain an estimate for each term of the sum. First we may assume that $|Q_j\cap Q|\neq0$ since otherwise, $\int_E f\chi_{Q_j}=0$ for every measurable set and the corresponding term in the sum equals zero.  Now we split $A_\mathcal{S}(f\chi_{Q_j}) $ as follows

\begin{equation}\label{SplitSparse}
A_\mathcal{S}(f\chi_{Q_j}) (x)=\sum_{P\in\mathcal{S},\, P\subsetneq Q_j} \frac{1}{|P|}\int_{P}f\chi_{Q_j}\chi_P(x) + \sum_{P\in\mathcal{S},\, P\supseteq Q_j} \frac{1}{|P|}\int_{P}f\chi_{Q_j}\chi_P(x) .
\end{equation}

We observe that for the first term

\begin{equation*}
\frac{\sum_{P\in\mathcal{S},\, P\subsetneq Q_j} \frac{1}{|P|}\int_{P}f\chi_{Q_j}\chi_P(x)}{M\left(f\chi_{Q_j}\right)(x)}\leq \sum_{P\in\mathcal{S},\, P\subseteq Q_j} \chi_P(x)
\end{equation*}

For the second term, we have that that since $Q_j\cap Q\neq\emptyset$ and $|Q|\simeq |Q_j|$ then $Q\subset 5Q_j$. Consequently,
\begin{eqnarray*}
& \frac{\sum_{P\in\mathcal{S},\, P\supseteq Q_j} \frac{1}{|P|}\int_{P}f\chi_{Q_j}\chi_P(x)}{M(f\chi_{Q_j})(x)}
&\leq  \frac{\sum_{P\in\mathcal{S},\, P\supseteq Q_j} \frac{1}{|P|}\int_{Q_j}f\chi_P(x)}{\frac{1}{|5Q_j|}\int_{Q_j} f}\\
& &\leq c_n\sum_{P\in\mathcal{S},\, P\supseteq Q_j} \frac{|Q_j|}{|P|}\chi_P(x)\\
& &\leq c_n\sum_{k=0}^\infty \frac{1}{2^{nk}}\end{eqnarray*}

Then, combining the estimates obtained for each of the terms of (\ref{SplitSparse}), we have that

\[\begin{split}&\frac{1}{|Q|}\left |\left\{x\in Q: A_{\mathcal{S}}\left(f\chi_{Q_j}\right)(x)> \frac{t}{c_n}M(f\chi_{Q_j})(x)\right\}\right|\\
\leq&\frac{1}{|Q|}\left |\left\{x\in Q: \sum_{P\in\mathcal{S},\, P\subseteq Q_j} \chi_P(x)  > \frac{t}{c_n}-c_n\sum_{k=0}^\infty \frac{1}{2^{nk}}\right\}\right|\end{split}\]

and the desired conclusion, namely (\ref{Subgausiandecay2}), follows from (\ref{ClaimLocal}).

%\note{poner nuevo entorno, poner Observation}

\begin{observation}
Let $T$ be a Calder\'on-Zygmund operator and $b\in BMO$. It is not
possible to find a finite set of $\eta$-sparse families $\left\{ \mathcal{S}_{j}\right\} _{j=1}^{N}$, with $N$ dimensional, 
contained in the same or in different dyadic lattices $\mathcal{D}_{j}$
and depending on $f$ such that 
\begin{equation} \label{eq:FakeSparse}
|[b,T]f(x)|\leq c_{b,T}\sum_{j=1}^{N}B_{\mathcal{S}_{j}}f(x) \qquad a.e. \,\,x \in \mathbb{R}^{n}
\end{equation}
where $B_{\mathcal{S}_{j}}f(x)=\sum_{Q\in\mathcal{S}_{j}} \|f\|_{\strt{1.7ex} L\log L,Q}\, \chi_{Q}(x)$.
\end{observation}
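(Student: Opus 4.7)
The plan is to argue by contradiction using the sharpness result Observation \ref{Thm:SharpLocal}. Assume that a finite family of $\eta$-sparse collections $\{\mathcal{S}_{j}\}_{j=1}^{N}$ exists satisfying (\ref{eq:FakeSparse}). The strategy is to show that any such pointwise sparse bound would force a clean exponential local decay for $[b,T]$, which is strictly better than the subexponential decay already forced from below by Observation \ref{Thm:SharpLocal}.

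The first step is to establish, by adapting the argument of Section \ref{sparse} leading to (\ref{Subgausiandecay2}), the local exponential decay
\[
\frac{1}{|Q|}\left|\{x\in Q : B_{\mathcal{S}}(f\chi_{Q})(x) > t\, M^{2}(f\chi_{Q})(x)\}\right| \leq c\, e^{-c\,t}, \qquad t>0,
\]
for every $\eta$-sparse family $\mathcal{S}$, every cube $Q$, and every $f$; here $M$ is replaced by the pointwise equivalent $M^{2}\sim M_{L\log L}$, matching the $L\log L$ averages appearing in $B_{\mathcal{S}}$. Combined with the assumed pointwise domination (\ref{eq:FakeSparse}) and a union bound over $j=1,\dots,N$, this transfers the exponential decay to the commutator, yielding, for some $\gamma>0$,
\[
\frac{1}{|Q|}\left|\{x\in Q : |[b,T](f\chi_{Q})(x)| > t\, M^{2}(f\chi_{Q})(x)\}\right| \leq C\, e^{-\gamma\,t}, \qquad t>0.
\]

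The contradiction then follows by specializing to the concrete configuration of Observation \ref{Thm:SharpLocal}: $T=H$, $b(x)=\log|x|$, $Q=(0,1)$ and $f=\chi_{(0,1)}$. A direct computation gives $M^{2}\chi_{(0,1)}\equiv 1$ on $(0,1)$, so the previous display collapses to
\[
|\{x\in(0,1) : |[b,H]\chi_{(0,1)}(x)|>t\}| \leq C\, e^{-\gamma\,t}, \qquad t>0.
\]
But Observation \ref{Thm:SharpLocal} supplies the subexponential lower bound $\geq e^{-\sqrt{c\,t}}$ for all $t$ sufficiently large, and $e^{-\gamma t}<e^{-\sqrt{c\,t}}$ as soon as $t>c/\gamma^{2}$. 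This is the contradiction.

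The principal technical obstacle is proving the exponential decay for $B_{\mathcal{S}}$. Following the template from the proof of (\ref{Subgausiandecay2}), one covers $Q$ by $c_{n}$ dyadic cubes $Q_{j}$ with $|Q_{j}|\sim|Q|$ (adjoined to $\mathcal{S}$ without breaking sparseness, by Lemma \ref{lem:SparseCarleson}), and splits $B_{\mathcal{S}}(f\chi_{Q_{j}})(x)$ into contributions from cubes $P\subsetneq Q_{j}$ and from cubes $P\supseteq Q_{j}$. The first contribution is bounded by $M^{2}(f\chi_{Q_{j}})(x)$ times the counting function $\sum_{P\subseteq Q_{j},\, x\in P}1$, since $\|f\chi_{Q_{j}}\|_{L\log L, P}\leq M_{L\log L}(f\chi_{Q_{j}})(x)$ whenever $x\in P$; the counting function obeys the exponential distributional bound (\ref{ClaimLocal}) coming from the vector-valued Fefferman-Stein inequality. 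The hard part is the tail: one must verify the Orlicz-scaling estimate $\|f\chi_{Q_{j}}\|_{L\log L, P}\lesssim (\log m/m)\,\|f\|_{L\log L, Q_{j}}$ with $m=|P|/|Q_{j}|$, a direct consequence of the Young function $\Phi(t)=t(1+\log^{+}t)$, in which the slow $\log m$ factor barely weakens the geometric gain $1/m$. This is precisely what keeps the series $\sum_{k\geq 1}k/2^{nk}$ summable and reduces the tail to a constant multiple of $M^{2}(f\chi_{Q_{j}})(x)$. The remainder is bookkeeping identical to the argument preceding (\ref{Subgausiandecay2}).
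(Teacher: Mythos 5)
Your argument is essentially the paper's own second proof of this observation: assume the sparse bound, show (by rerunning the argument that gave (\ref{Subgausiandecay2}), with $M^{2}\simeq M_{L\log L}$ and the Luxemburg-norm scaling for the cubes $P\supseteq Q_{j}$) that each $B_{\mathcal{S}_{j}}$ enjoys local exponential decay relative to $M^{2}$, transfer this to $[b,T]$ by the pointwise domination and a union bound, and contradict the subgaussian lower bound of Observation \ref{Thm:SharpLocal} for $b=\log|x|$, $T=H$, $f=\chi_{(0,1)}$. The only difference is cosmetic: you spell out the $L\log L$ rescaling estimate that the paper dismisses as ``essentially the same argument,'' and you do not touch the paper's alternative first proof via the Rubio de Francia algorithm.
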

We are going to give two proofs of this result. The first one is
based on the Rubio de Francia algorithm.
\begin{proof}
[Proof 1] Suppose that (\ref{eq:FakeSparse}) holds, then we can prove the following $L^{1}$ inequality
\begin{equation}
\|[b,T]f\|_{L^{1}(w)}\leq c[w]_{A_{1}}\|M^{2}f\|_{L^{1}(w)}.\label{eq:A1fake}
\end{equation}
Indeed,
\[
\begin{split}\|[b,T]f\|_{L^{1}(w)} & \leq c_{b,T}\sum_{j=1}^{N}\|B_{\mathcal{S}_{j}}f\|_{L^{1}(w)}\\
 & \leq c_{b,T}\sum_{j=1}^{N}\sum_{Q\in\mathcal{S}_{j}} \|f\|_{\strt{1.7ex} L\log L,Q} \frac{w(Q)}{|Q|}|Q|\\
 & \leq\frac{c_{b,T}}{\eta}\sum_{j=1}^{N}\sum_{Q\in\mathcal{S}_{j}} \|f\|_{\strt{1.7ex} L\log L,Q} \frac{w(Q)}{|Q|}|E(Q)|\\
 & \leq\frac{c_{b,T}}{\eta}\sum_{j=1}^{N}\sum_{Q\in\mathcal{S}_{j}}\int_{E(Q)}M_{\strt{1.7ex} L\log L}f(x)Mw(x)dx\\
 & \leq N\frac{c_{b,T}}{\eta}[w]_{A_{1}}\|M^{2}f\|_{L^{1}(w)},
\end{split}
\]
since $M^2 \approx M_{\strt{1.5ex} L\log L}$.  We claim now the $L^p$ version,
\begin{equation}
\|[b,T]f\|_{L^{p}(\mathbb{R}^{n})}\leq c_{n}p\|M^{2}f\|_{L^{p}(\mathbb{R}^{n})}\qquad p>1.\label{eq:int}
\end{equation}
Indeed, by duality we can find $g\geq0$ in $L^{p'}(\mathbb{R}^{n})$ with
unit norm such that 
\[
\|[b,T]f\|_{L^{p}(\mathbb{R}^{n})}=\int_{\mathbb{R}^{n}}|[b,T]f(x)|g(x)dx.
\]
We consider the Rubio de Francia algorithm
\[
Rg=\sum_{k=0}^{\infty}\frac{M^{k}(g)}{\|M\|_{L^{p'}(\mathbb{R}^{n})}^{k}}.
\]
It's a straightforward computation that $R(g)$ is an $A_{1}$ weight
with constant 
\[
\left[Rg\right]_{A_{1}}\leq2\|M\|_{L^{p'}}\leq c_{n}p
\]
and also that $g\leq Rg$ and $\|Rg\|_{L^{p'}}\leq2\|g\|_{L^{p'}(\mathbb{R}^{n})}=2$.
Then have that 
\[
\int_{\mathbb{R}^{n}}|[b,T]f(x)|g(x)dx\leq\int_{\mathbb{R}^{n}}|[b,T]f(x)|Rg(x)dx
\]
and using \eqref{A1fake} and Hölder inequality 
\[
\begin{split} & \int_{\mathbb{R}^{n}}|[b,T]f(x)|Rg(x)dx\leq c\left[Rg\right]_{A_{1}}\int_{\mathbb{R}^{n}}M^{2}f(x)Rg(x)dx\\
 & \leq cp\int_{\mathbb{R}^{n}}M^{2}f(x)Rg(x)dx\leq cp\|M^{2}f\|_{L^{p}(\mathbb{R}^{n})}\|Rg\|_{L^{p'}(\mathbb{R}^{n})}\\
 & \leq cp\|M^{2}f\|_{L^{p}(\mathbb{R}^{n})}.
\end{split}
\]
Hence \eqref{int} is established. Now since 
\[
\|M^{2}\|_{L^{p}(\mathbb{R}^{n})}\leq c_n\,\left(p'\right)^{2}\qquad p>1
\]
we have that 
\begin{equation}\label{eq:BadUpperBound} 
\|[b,T]\|_{L^{p}(\mathbb{R}^{n})}\leq cp\left(p'\right)^{2}  \qquad p>1
\end{equation}
Now let us observe that if we take $[b,H]f$ with $b(x)=\log|x|$
and $f(x)=\chi_{(0,1)}(x)$ then 
\[
\|[b,H]f\|_{L^{p}(\mathbb{R})}\geq cp^{2} \qquad p>1, 
\]
and this leads to a contradiction when $p\rightarrow\infty$. To prove this lower estimate we use estimate (\ref{cotafinal}) from Theorem
\ref{Thm:SharpLocal}. Indeed, for some $t_0>0$
\[
\begin{split}\|[b,H]f\|_{L^{p}(\mathbb{R})} & \geq\|\left[b,H\right]f\|_{L^{p,\infty}(\mathbb{R})}=\sup_{t>0}t|\{x\in\mathbb{R}:|[b,H]f(x)|>t\}|^{\frac{1}{p}}\\
 & \geq\sup_{t>t_{0}}t\left|\left\{ x\in(0,x_{0}):\,c\left(\log\frac{1}{x}\right)^{2}>t\right\} \right|^{\frac{1}{p}}\\
 & \geq\sup_{t>t_{0}}tce^{\frac{-\sqrt{t}}{p}} \geq c\,p^2\,t_0   e^{-\sqrt{t_0} }
 %
 %cp^{2}\sup_{t>t_{0}}\frac{t}{p^{2}}e^{-\sqrt{\frac{t}{p^{2}}}}\geq cp^{2}
\end{split}
\]
and this concludes the first proof.
\end{proof}
For the second proof we will rely on the sharpness
result that was settled in the previous section, namely, we are going to prove that if a pointwise sparse control as the one in (\ref{eq:FakeSparse}) holds, then the commutator would have a local exponential decay, which as we established before, is not the case.
\begin{proof}
[Proof 2] Assume again that (\ref{eq:FakeSparse}) holds. Then, for some $c>1$
\[
\begin{split} & \left|\left\{ x\in Q:|[b,T]f(x)|>tM^{2}f(x)\right\} \right|\leq 
\left|\left\{ x\in Q:   \sum_{j=1}^{N} \sum_{P\in\mathcal{S}_j }\|f\|_{\strt{1.7ex} L\log L,P}\chi_{P}(x)>\frac{t}{c}\,M^{2}f(x)\right\} \right|\end{split}
\]

It will be enough for our purposes to work on each term of the inner sum, namely to control 
\[\left|\left\{ x\in Q:   \sum_{P\in\mathcal{S}_j }\|f\|_{\strt{1.7ex} L\log L,P}\chi_{P}(x)>t M^{2}f(x)\right\}\right|\]
Now, recalling that $M^{2}f\simeq M_{\strt{1.7ex} L\log L}f$, is not hard to see that essentially the same argument we used to prove (\ref{Subgausiandecay2}) yields that
\[
\frac{1}{|Q|}\left|\left\{ x\in Q:   \sum_{P\in\mathcal{S}_j }\|f\|_{\strt{1.7ex} L\log L,P}\chi_{P}(x)>t M^{2}f(x)\right\}\right| \leq ce^{-\alpha t}
\]
Combining the preceding estimates we arrive to 
\[
\frac{1}{|Q|}\left|\left\{ x\in Q:|[b,T]f(x)|>tM^{2}f(x)\right\} \right|\leq ce^{-\alpha t} \qquad   t>0
\]
which is a contradiction by Observation \ref{Thm:SharpLocal}. 
\end{proof}
The correct pointwise control for the commutator seems to be the following
one
\begin{conjecture}
Let $T$ be a Calderón-Zygmund operator and $b\in BMO$. Then 
\[
|[b,T]f(x)|\leq C(n,T)\|b\|_{BMO}\sum_{i,j=1}^{N}A_{\mathcal{S}_{i}}\left(A_{\mathcal{S}_{j}}f\right)(x)
\]
where $A_{\mathcal{S}_{j}}f(x)=\sum_{Q\in\mathcal{S}_{j}}\frac{1}{|Q|}\int_{Q}\left|f(y)\right|dy\chi_{Q}(x)$
and the sparse families $\mathcal{S}_{j}$ are not necessarily subfamilies of
the same dyadic lattice.
\end{conjecture}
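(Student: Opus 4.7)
The plan is to mimic the stopping-time/recursion proof of Theorem \ref{DominationThm} but carry an extra factor $b(x)-b_Q$ through the decomposition, and then absorb that factor by applying the same sparse domination machinery a second time. This is exactly the kind of structure that would explain the iterated sparse operator $A_{\mathcal{S}_i}(A_{\mathcal{S}_j}f)$ in the statement.

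First, I would fix a large dyadic cube $Q_{0}$ containing the support of $f$ and use the algebraic identity
\[
[b,T](f\chi_{Q_0})(x)=(b(x)-b_{Q_0})\,T(f\chi_{Q_0})(x)-T\bigl((b-b_{Q_0})f\chi_{Q_0}\bigr)(x).
\]
On $Q_{0}$ this splits the commutator into an ``outer'' term carrying the multiplier $b(x)-b_{Q_0}$ times an uncommuted singular integral, and an ``inner'' term which is a plain singular integral applied to $(b-b_{Q_0})f$. The point is that $T$ by itself already has a sparse bound by Theorem \ref{DominationThm}, so in each of the two summands the ``hard'' operator has effectively been replaced by $T$.

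Next I would run a stopping-time construction analogous to the one behind Theorem \ref{DominationThm}, building a family $\mathcal{F}\subseteq\mathcal{D}(Q_0)$ whose maximal elements are the maximal dyadic cubes $Q\subsetneq Q_0$ on which at least one of the natural ``large'' conditions fails: either $\|f\|_{L\log L,Q}$ exceeds a fixed multiple of $\|f\|_{L\log L,Q_0}$, or $|b_{Q}-b_{Q_0}|$ exceeds a fixed multiple of $\|b\|_{BMO}$, or the $L^1$ average $\frac{1}{|Q|}\int_Q |b-b_{Q_0}||f|$ is too big. The John--Nirenberg inequality guarantees that the union of those stopping children has measure at most, say, $\tfrac12|Q_0|$, which is exactly the $\eta$-sparseness needed in Definition \ref{DefE(Q}.

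Third, on the complement of the stopping cubes (i.e.\ the ``good'' part of $Q_0$), the two pieces of the algebraic identity are estimated pointwise. For the inner term, Theorem \ref{DominationThm} applied to $(b-b_{Q_0})f\chi_{Q_0}$ produces a sparse operator $A_{\mathcal{S}}((b-b_{Q_0})f)$; the stopping construction then ensures $\tfrac{1}{|Q|}\int_Q |b-b_{Q_0}||f|\lesssim \|b\|_{BMO}\|f\|_{L\log L,Q}$ on each sparse cube (using that the standard generalized H\"older inequality gives $\frac{1}{|Q|}\int_Q |b-b_{Q_0}||f|\leq C\|b-b_{Q_0}\|_{\exp L,Q}\|f\|_{L\log L,Q}$ and John--Nirenberg bounds the first factor by $\|b\|_{BMO}$), which converts $A_{\mathcal{S}}((b-b_{Q_0})f)$ into one copy of $A_{\mathcal{S}}$ applied to an object that is itself dominated by $\|b\|_{BMO}\,A_{\mathcal{S}'}f$ (the inner sparse average). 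For the outer term $(b(x)-b_{Q_0})T(f\chi_{Q_0})(x)$, I would apply Theorem \ref{DominationThm} to $T(f\chi_{Q_0})$ and then re-expand the factor $b(x)-b_{Q_0}$ by telescoping through the stopping cubes, which introduces the second level of sparse averaging. Finally, I would recurse inside each stopping cube of $\mathcal{F}$, combining all levels into $3^n$ sparse families in the same or in different dyadic lattices (by the $3^n$ adjacent dyadic grid trick used to prove Theorem \ref{DominationThm}).

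The main obstacle is the outer term: extracting the factor $b(x)-b_{Q_0}$ in a way that is genuinely pointwise rather than merely in some dual or weighted norm, and ensuring that this extraction produces a sparse operator (and not a fattened version of it) acting outside the inner sparse operator. The only way I see to do this cleanly is the telescoping identity $b(x)-b_{Q_0}=\sum_Q (b_{Q^{(1)}}-b_Q)\chi_Q(x)$ along the chain of dyadic ancestors of $x$, combined with the stopping-time control $|b_{Q^{(1)}}-b_{Q}|\lesssim \|b\|_{BMO}$ on non-stopping transitions and the sparseness of the stopping family to absorb jumps. Making that telescoping interact correctly with the sparse domination of $T$ on $f\chi_{Q_0}$---so that the product of a sparse sum (from telescoping) and a sparse sum (from Theorem \ref{DominationThm}) really can be rewritten as a sum of compositions $A_{\mathcal{S}_i}(A_{\mathcal{S}_j}f)$ rather than a more complicated bilinear sparse form---is the step that would require the most care.
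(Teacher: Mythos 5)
You should note first that the paper contains no proof of this statement: it is posed explicitly as a conjecture, so your sketch has to stand entirely on its own, and as written it has a genuine gap at exactly the point you flag yourself, the outer term. After the splitting $[b,T](f\chi_{Q_0})=(b-b_{Q_0})\,T(f\chi_{Q_0})-T\bigl((b-b_{Q_0})f\chi_{Q_0}\bigr)$ and an application of Theorem \ref{DominationThm}, the outer term is $|b(x)-b_{Q_0}|\sum_{Q\in\mathcal{S}}\langle|f|\rangle_Q\chi_Q(x)$. Telescoping $b(x)-b_{Q_0}$ along the full chain of dyadic ancestors of $x$ contributes $O(\|b\|_{BMO})$ per generation but over unboundedly many generations, so the telescoping sum is not a sparse operator; to sparsify it you must run a stopping construction adapted to $b$, and then you are holding the product of two sparse sums built from different stopping data (and, for the family coming from Theorem \ref{DominationThm}, possibly from up to $3^n$ different lattices). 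Rewriting the resulting bilinear object $\sum_{P}\sum_{Q}c_P\,\langle|f|\rangle_Q\,\chi_{P\cap Q}(x)$ as a finite sum of compositions $A_{\mathcal{S}_i}(A_{\mathcal{S}_j}f)$ is precisely the content of the conjecture, and the proposal offers no mechanism for it beyond saying it ``would require the most care.''

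There is a second, structural problem: Theorem \ref{DominationThm} cannot be used as a black box and then combined with a recursion inside stopping cubes. The sparse family it produces for $T\bigl((b-b_{Q_0})f\chi_{Q_0}\bigr)$ contains cubes $Q$ arbitrarily deep inside your stopping cubes, where the generalized H\"older inequality only yields $\langle|b-b_{Q_0}||f|\rangle_Q\lesssim\bigl(\|b\|_{BMO}+|b_Q-b_{Q_0}|\bigr)\|f\|_{L\log L,Q}$, and $|b_Q-b_{Q_0}|$ can be of order $\|b\|_{BMO}\log\bigl(\ell(Q_0)/\ell(Q)\bigr)$; since $T$ is nonlocal, your stopping conditions do not let you re-choose the subtracted constant on those cubes after the fact. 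The way this is actually resolved in the literature (Lerner, Ombrosi and Rivera-R\'{\i}os, after this paper) is to redo the local decomposition at the level of the commutator itself, via a grand maximal truncation operator, obtaining the pointwise bound by $\sum_{Q\in\mathcal{S}}\bigl(|b(x)-b_{R_Q}|\langle|f|\rangle_Q+\langle|b-b_{R_Q}||f|\rangle_Q\bigr)\chi_Q(x)$ with the constant adapted to each sparse cube, and only then deducing the iterated estimate. Note finally that even your inner term, once reduced to $\sum_{Q}\|f\|_{L\log L,Q}\chi_Q$, is not yet of the conjectured form: bounding $\|f\|_{L\log L,Q}$ by $\langle A_{\mathcal{S}_Q}f\rangle_Q$ introduces one sparse family per cube $Q$, and the union of these families over $Q\in\mathcal{S}$ need not be sparse (a sparse family may contain infinitely many ancestors of a fixed cube), so an additional augmentation argument is needed there as well.
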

If this conjecture holds it would be very easy to recover the main theorem from \cite{MR2869172} since it suffices to iterate the following estimate:
\[
\left\Vert A_{\mathcal{S}_{j}}f\right\Vert _{L^{p}(w)}\leq C_{n,p}[w]_{A_{p}}^{\max\left\{ 1,\frac{1}{p-1}\right\} }\|f\|_{L^{p}}.
\]
which was studied in \cite{MR2628851,MR2854179} (see also \cite{2015arXiv150805639L}).

\section{Third observation: The failure of a endpoint estimate motivated by the
conjugation method} \label{negativeEstimateConjugationMethod}

In this section we consider the following family of operators: 
\begin{equation}
f
\to   T_w(f) := w\,T\left(\frac{f}{w}\right)
\end{equation}
where $w$ is a weight and $T$ is a Calder\'on-Zygmund operator. We already mentioned in the introduction that these operators are of interest since they are very much related  to commutators due to the conjugation method. We emphasized that the case $w\in A_1$ is special since $T_w$ is of weak type $(1,1)$  as a consequence of the main results  from \cite{MR2172941}. Understanding the case $w\in A_p$ would be more interesting 
due to its connection with the conjugation method.  However, $T_w$ is {\bf not} of weak type $(1,1)$ in general since there are weights $w\in A_{p}\setminus A_{1}$ for which 
$$
\left\| T_w  \right\|_{L^{1}(\mathbb{R}^{n}) \to L^{1, \infty}(\mathbb{R}^{n})} =\infty,
$$
being the purpose of this section to show the existence of such weights. In fact we are going to show something worst replacing $T$ by the less singular operator $M$.

\begin{observation} \label{Thm19}   Let $1<p<\infty$, then there is \,$w\in A_{p}\setminus A_{1}$\, such that
$$
\left\Vert M_w  \right\Vert_{L^{1}(\mathbb{R}^{n}) \to L^{1, \infty}(\mathbb{R}^{n})} =\infty.
$$

\end{observation}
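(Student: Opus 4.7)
My plan is to exhibit an explicit power weight sitting in $A_p\setminus A_1$ together with a concrete test function whose image under $M_w$ has infinite $L^{1,\infty}$ norm. Given $p>1$, I would choose any $a$ with $0<a<\min\{n,\,n(p-1)\}$ and set $w(x)=|x|^a$, with test function $f=\chi_{B(0,1)}$. The classical characterization of power weights, $|x|^a\in A_p\iff -n<a<n(p-1)$ and $|x|^a\in A_1\iff -n<a\leq 0$, ensures that $w\in A_p\setminus A_1$. Note that $\|f\|_{L^1}=|B(0,1)|<\infty$, and that the bound $a<n$ makes $f/w$ integrable with $\|f/w\|_{L^1}=c_n/(n-a)$.

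The key analytic step is a lower bound for $M(f/w)$ outside a neighbourhood of the origin. For $|x|\geq 1$ the ball $B(x,2|x|)$ contains $B(0,1)$, so averaging $f/w$ over this ball gives
\[
M\!\left(\frac{f}{w}\right)(x)\;\geq\;\frac{1}{|B(x,2|x|)|}\int_{B(0,1)}|y|^{-a}\,dy\;\geq\;\frac{c_{n,a}}{|x|^{n}}.
\]
Multiplying by $w(x)=|x|^{a}$ yields the pointwise estimate
\[
M_wf(x)\;\geq\;c_{n,a}\,|x|^{a-n}\qquad\text{for }|x|\geq 1.
\]

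The conclusion then follows from a distribution-function calculation. Since $0<a<n$, for $\lambda$ small enough the superlevel set $\{M_wf>\lambda\}$ contains the annulus $\{1\leq|x|\leq(c_{n,a}/\lambda)^{1/(n-a)}\}$, which has measure of order $\lambda^{-n/(n-a)}$. Therefore
\[
\lambda\,\bigl|\{x:M_wf(x)>\lambda\}\bigr|\;\geq\;c\,\lambda^{\,1-\frac{n}{n-a}}\;=\;c\,\lambda^{-\frac{a}{n-a}}\;\xrightarrow[\lambda\to 0^+]{}\;\infty,
\]
since $a>0$. Hence $M_wf\notin L^{1,\infty}(\mathbb{R}^n)$ while $f\in L^1(\mathbb{R}^n)$, and therefore $\|M_w\|_{L^1\to L^{1,\infty}}=\infty$.

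I do not foresee any serious obstacle: the whole argument rests on the standard membership of power weights in the Muckenhoupt classes plus an elementary lower bound on the maximal function of a radially decreasing integrable function. The only delicate point is checking that for each $p>1$ the parameter window $0<a<\min\{n,n(p-1)\}$ is non-empty, which it is, so the counterexample works uniformly across all $p\in(1,\infty)$.
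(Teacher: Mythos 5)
Your argument is correct and follows essentially the same route as the paper: a positive power weight $|x|^{a}$ with $0<a<\min\{n,n(p-1)\}$ (the paper writes it as $|x|^{-\delta(1-p)}$ in dimension one), the characteristic function of the unit ball as test function, a pointwise lower bound $M_w f(x)\gtrsim |x|^{a-n}$ away from the support, and the observation that this power just fails to be in $L^{1,\infty}$ as $\lambda\to 0^{+}$. The only difference is cosmetic: you carry out the computation directly in $\mathbb{R}^{n}$ and verify the $A_p\setminus A_1$ membership explicitly, whereas the paper works on the real line.
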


\begin{proof}
In dimension $1$ we choose the $A_p$ weight $w(x)=|x|^{-\delta(1-p)}$ with $\delta\in\left(0,\min\left\{ 1,\frac{1}{p-1}\right\} \right)$
and $f=\chi_{[0,1]}$ so that  $f\in L^{1}(w)$.  We prove that 
\[
\left\Vert wM\left(\frac{f}{w}\right)\right\Vert _{L^{1,\infty}\left(\mathbb{R}\right)}=\infty.
\]
Indeed, a computation shows that for $x>1$
\[
M\left(\frac{\chi_{(0,1)}}{w}\right)(x)\geq \frac{1}{x}\frac{1}{\beta}
\]
with  $\beta=1+\delta(1-p)$ and then 
\[
\begin{split}\left\Vert wM\left(\frac{f}{w}\right)\right\Vert _{L^{1,\infty}\left(\mathbb{R} \right)} 
 & \geq \frac{1}{\beta}\, \sup_{t>0}t\left|\left\{ x>1\,:  \,x^{-\delta(1-p)-1}>t \right\} \right|  = \frac{1}{\beta}\, \sup_{1>t>0}t\left(\left(\frac{1}{t}\right)^{\frac{1}{\beta}}-1\right)=\infty
\end{split}
\]
since $\beta \in (0,1)$.

\end{proof}

An interesting question is to find a necessary and sufficient condition for the boundedness  of this operator, namely, characterize the weights $w$ for which 
$$ \left\Vert M_w  \right\Vert_{L^{1}(\mathbb{R}^{n}) \to L^{1, \infty}(\mathbb{R}^{n})} < \infty.
$$
In \cite{MR0447956} Muckenhoupt and Wheeden proved that this inequality
holds for $w\in A_{1}$ in the real line and also obtained a necessary
condition on the weights, namely
\[
\left\Vert \frac{w\chi_{Q}}{|\cdot-x|^{n}}\right\Vert _{L^{1,\infty}\left(\mathbb{R}^{n}\right)}\leq cw(x)\qquad\text{a.e. }x\in\mathbb{R}^{n}
\]
but we don't know whether is sufficient or not.

To end this section we show that  can go further and prove a negative result for possible $L\log L$ type estimates.

\begin{observation}
Let $1<p<\infty$, and let $\Phi(t)=t\,\log(e+t)^{\alpha} $, $\alpha>0$.  Then  we can find $w\in A_{p}\setminus A_{1}$
and $f$ such that there's no $c>0$ for which 
\begin{equation} \label{LlogL-false}
\left|\left\{ x\in\mathbb{R}^{n}\,:\,wM\left(\frac{f}{w}\right)>t\right\} \right|\leq c\int_{\mathbb{R}^{n}}\Phi\left(\frac{|f(x)|}{t}\right)dx.
\end{equation}

\end{observation}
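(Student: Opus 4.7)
The plan is to reuse the weight and test function from the proof of Observation \ref{Thm19}. Working in dimension one, fix $w(x)=|x|^{-\delta(1-p)}$ with $\delta\in(0,\min\{1,1/(p-1)\})$, take $f=\chi_{(0,1)}$, and set $\beta:=1+\delta(1-p)\in(0,1)$. Then $w\in A_p\setminus A_1$, and the pointwise lower bound $M(f/w)(x)\geq 1/(\beta x)$ for $x>1$ already obtained in Observation \ref{Thm19} gives, after multiplying by $w$, the crucial estimate
$$
wM\!\left(\tfrac{f}{w}\right)(x)\geq \frac{1}{\beta}\,x^{-\beta},\qquad x>1.
$$

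Reading off the distribution function, for every sufficiently small $t>0$,
$$
\left|\left\{x\in\mathbb{R}:wM\!\left(\tfrac{f}{w}\right)(x)>t\right\}\right|\geq \left|\left\{x>1:\tfrac{1}{\beta}x^{-\beta}>t\right\}\right|=(\beta t)^{-1/\beta}-1,
$$
which is comparable to $t^{-1/\beta}$ as $t\to 0^{+}$. This is the essential quantitative input: since $\beta\in(0,1)$ the exponent $1/\beta$ is strictly larger than $1$.

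Next I would compute the right hand side of (\ref{LlogL-false}): because $f=\chi_{(0,1)}$,
$$
\int_{\mathbb{R}}\Phi\!\left(\tfrac{|f(x)|}{t}\right)dx=\Phi(1/t)=\tfrac{1}{t}\bigl(\log(e+1/t)\bigr)^{\alpha}\leq C_{\alpha}\,t^{-1}\bigl(\log(1/t)\bigr)^{\alpha}
$$
for small $t$. Comparing the two sides, the ratio
$$
\frac{t^{-1/\beta}}{t^{-1}\bigl(\log(1/t)\bigr)^{\alpha}}=\frac{t^{-(1/\beta-1)}}{\bigl(\log(1/t)\bigr)^{\alpha}}\longrightarrow\infty\quad\text{as }t\to 0^{+}
$$
contradicts (\ref{LlogL-false}) for any finite constant $c$, which finishes the proof.

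There is no genuine obstacle. The single conceptual point is that an $A_p\setminus A_1$ weight produces polynomial decay for $wM(f/w)$ with exponent $\beta\in(0,1)$, so the level set has measure growing like a power of $1/t$ \emph{strictly} larger than $1/t$; no logarithmic factor $(\log 1/t)^{\alpha}$ can bridge this polynomial gap. All remaining verifications are routine calculations parallel to those carried out in the proof of Observation \ref{Thm19}.
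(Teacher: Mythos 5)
Your proposal is correct and follows essentially the same route as the paper: the same weight $w(x)=|x|^{-\delta(1-p)}$, the same test function $f=\chi_{(0,1)}$, the lower bound $wM(f/w)(x)\gtrsim x^{-\beta}$ for $x>1$ inherited from Observation \ref{Thm19}, and the observation that the resulting level-set measure $\approx t^{-1/\beta}$ with $1/\beta>1$ cannot be dominated by $\Phi(1/t)\approx t^{-1}(\log(1/t))^{\alpha}$ as $t\to 0^{+}$. The paper phrases this as the supremum over $0<t<1$ of the ratio being infinite, which is exactly your limit computation.
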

\begin{proof} As above we do it for the case $n=1$. 
We assume the contrary, namely there is a finite constant $c>0$ such that (\ref{LlogL-false}) holds for any nonnegative $f$. 
Let $f=\chi_{(0,1)}$. For this choice of $f$ the right hand side of (\ref{LlogL-false}) equals  $\Phi\left(\frac{1}{t}\right)$ and we have that
\[
\sup_{t>0} \frac{1}{\Phi\left(\frac{1}{t}\right)}\left|\left\{ x\in\mathbb{R}\,:\,wM\left(\frac{\chi_{(0,1)}}{w}\right)>t\right\} \right| <\infty. 
\]
Choose again the $A_p$ weight $w(x)=|x|^{-\delta(1-p)}$ with $\delta\in\left(0,\min\left\{ 1,\frac{1}{p-1}\right\} \right)$. 
Proceeding and using the same notation as in the proof of Observation \ref{Thm19} we have that
%
%\[\left|\left\{ x\in\mathbb{R}\,:\,wM\left(\frac{\chi_{(0,1)}}{w}\right)>t\right\} \right|\geq\left(\frac{1}{t}\right)^{\frac{1}{\beta}}-1\]
\[
\begin{split}\sup_{t>0}\frac{1}{\Phi\left(\frac{1}{t}\right)}\left|\left\{ x\in\mathbb{R}\,:\,wM\left(\frac{\chi_{(0,1)}}{w}\right)>t\right\} \right| & \geq c\,\sup_{0<t<1}\frac{1}{\Phi\left(\frac{1}{t}\right)}\left[\left(\frac{1}{t}\right)^{\frac{1}{\beta}}-1\right]\\
 & =c\,\sup_{0<t<1}\frac{t}{\log\left(e+\frac{1}{t}\right)^{\alpha}}\left[\left(\frac{1}{t}\right)^{\frac{1}{\beta}}-1\right] =\infty.
\end{split}
\]
since $\beta \in (0,1)$.

\end{proof}

\bibliographystyle{plain}
\bibliography{refs}

\def\cprime{$'$}
\begin{thebibliography}{10}

\bibitem{MR1211818}
J.~{\'A}lvarez, R.~J. Bagby, D.~S. Kurtz, and C.~P{\'e}rez.
\newblock Weighted estimates for commutators of linear operators.
\newblock {\em Studia Math.}, 104(2):195--209, 1993.

\bibitem{B}
S.~Bloom.
\newblock A commutator theorem and weighted {BMO}.
\newblock {\em Trans. Amer. Math. Soc.}, 292(1):103--122, 1985.

\bibitem{MR1124164}
S.~M. Buckley.
\newblock Estimates for operator norms on weighted spaces and reverse {J}ensen
  inequalities.
\newblock {\em Trans. Amer. Math. Soc.}, 340(1):253--272, 1993.

\bibitem{MR2869172}
D.~Chung, M.~C. Pereyra, and C.~Perez.
\newblock Sharp bounds for general commutators on weighted {L}ebesgue spaces.
\newblock {\em Trans. Amer. Math. Soc.}, 364(3):1163--1177, 2012.

\bibitem{MR0380244}
R.~R. Coifman and Y.~Meyer.
\newblock On commutators of singular integrals and bilinear singular integrals.
\newblock {\em Trans. Amer. Math. Soc.}, 212:315--331, 1975.

\bibitem{MR0412721}
R.~R. Coifman, R.~Rochberg, and G.~Weiss.
\newblock Factorization theorems for {H}ardy spaces in several variables.
\newblock {\em Ann. of Math. (2)}, 103(3):611--635, 1976.

\bibitem{2014arXiv1409.4351C}
J.~M. {Conde-Alonso} and G.~{Rey}.
\newblock {A pointwise estimate for positive dyadic shifts and some
  applications}.
\newblock {\em Math. Ann. 365 (2016), no. 3-4, 1111--1135. 42B20},
  365(3-4):1111--1135, 2016.

\bibitem{MR2172941}
D.~Cruz-Uribe, J.~M. Martell, and C.~P{\'e}rez.
\newblock Weighted weak-type inequalities and a conjecture of {S}awyer.
\newblock {\em Int. Math. Res. Not.}, (30):1849--1871, 2005.

\bibitem{MR2628851}
D.~Cruz-Uribe, J.~M. Martell, and C.~P{\'e}rez.
\newblock Sharp weighted estimates for approximating dyadic operators.
\newblock {\em Electron. Res. Announc. Math. Sci.}, 17:12--19, 2010.

\bibitem{MR2854179}
D.~Cruz-Uribe, J.~M. Martell, and C.~P{\'e}rez.
\newblock Sharp weighted estimates for classical operators.
\newblock {\em Adv. Math.}, 229(1):408--441, 2012.

\bibitem{MR0284802}
C.~Fefferman and E.~M. Stein.
\newblock Some maximal inequalities.
\newblock {\em Amer. J. Math.}, 93:107--115, 1971.

\bibitem{MR1142721}
J.~Garc{\'{\i}}a-Cuerva, E.~Harboure, C.~Segovia, and J.~L. Torrea.
\newblock Weighted norm inequalities for commutators of strongly singular
  integrals.
\newblock {\em Indiana Univ. Math. J.}, 40(4):1397--1420, 1991.

\bibitem{HLW}
I.~{Holmes}, M.~T. {Lacey}, and B.~D. {Wick}.
\newblock {Commutators in the Two-Weight Setting}.
\newblock {\em ArXiv e-prints}, June 2015.

\bibitem{2015arXiv150903769H}
I.~{Holmes} and B.~D. {Wick}.
\newblock {Two Weight Inequalities for Iterated Commutators with
  Calder\'on-Zygmund Operators}.
\newblock {\em ArXiv e-prints}, September 2015.

\bibitem{MR3092729}
T.~Hyt{\"o}nen and C.~P{\'e}rez.
\newblock Sharp weighted bounds involving {$A_\infty$}.
\newblock {\em Anal. PDE}, 6(4):777--818, 2013.

\bibitem{2015arXiv151005789H}
T.~P. {Hyt{\"o}nen}, L.~{Roncal}, and O.~{Tapiola}.
\newblock {Quantitative weighted estimates for rough homogeneous singular
  integrals}.
\newblock {\em To appear in Israel J. Math.}, October 2015.

\bibitem{MR524754}
S.~Janson.
\newblock Mean oscillation and commutators of singular integral operators.
\newblock {\em Ark. Mat.}, 16(2):263--270, 1978.

\bibitem{MR1913610}
G.~A. Karagulyan.
\newblock Exponential estimates for the {C}alder\'on-{Z}ygmund operator and
  related problems of {F}ourier series.
\newblock {\em Mat. Zametki}, 71(3):398--411, 2002.

\bibitem{2015arXiv150105818L}
M.~T. {Lacey}.
\newblock {An elementary proof of the $A_2$ Bound}.
\newblock {\em To appear in Israel J. Math.}, January 2015.

\bibitem{2015arXiv151207247L}
A.~K. {Lerner}.
\newblock {On pointwise estimates involving sparse operators}.
\newblock {\em New York J. Math. 22 (2016), 341--349.}, 22:341--349, 2016.

\bibitem{2015arXiv150805639L}
A.~K. {Lerner} and F.~{Nazarov}.
\newblock {Intuitive dyadic calculus: the basics}.
\newblock {\em ArXiv e-prints}, August 2015.

\bibitem{MR0447956}
B.~Muckenhoupt and R.~L. Wheeden.
\newblock Some weighted weak-type inequalities for the {H}ardy-{L}ittlewood
  maximal function and the {H}ilbert transform.
\newblock {\em Indiana Univ. Math. J.}, 26(5):801--816, 1977.

\bibitem{MR3008263}
C.~Ortiz-Caraballo.
\newblock Quadratic {$A_1$} bounds for commutators of singular integrals with
  {BMO} functions.
\newblock {\em Indiana Univ. Math. J.}, 60(6):2107--2129, 2011.

\bibitem{MR3124931}
C.~Ortiz-Caraballo, C.~P{\'e}rez, and E.~Rela.
\newblock Exponential decay estimates for singular integral operators.
\newblock {\em Math. Ann.}, 357(4):1217--1243, 2013.

\bibitem{MR1317714}
C.~P{\'e}rez.
\newblock Endpoint estimates for commutators of singular integral operators.
\newblock {\em J. Funct. Anal.}, 128(1):163--185, 1995.

\bibitem{MR1827073}
C.~P{\'e}rez and G.~Pradolini.
\newblock Sharp weighted endpoint estimates for commutators of singular
  integrals.
\newblock {\em Michigan Math. J.}, 49(1):23--37, 2001.

\bibitem{2015arXiv150708568P}
C.~{P{\'e}rez} and I.~P. {Rivera-R{\'{\i}}os}.
\newblock {Borderline weighted estimates for commutators of singular
  integrals}.
\newblock {\em To appear in Israel J. Math.}, July 2015.

\bibitem{T}
A.~Torchinsky.
\newblock {\em Real-variable methods in harmonic analysis}, Volume 123 of {\em
  Pure and Applied Mathematics}.
\newblock Academic Press, Inc., Orlando, FL, 1986.

\end{thebibliography}

\end{document}